\def\XXint#1#2#3{{\setbox0=\hbox{$#1{#2#3}{\int}$}
\vcenter{\hbox{$#2#3$}}\kern-.5\wd0}}
\newtheorem{theorem}{Theorem}[section]
\newtheorem{lemma}[theorem]{Lemma}
\newtheorem{proposition}[theorem]{Proposition}
\newtheorem{corollary}[theorem]{Corollary}
\theoremstyle{definition}
\newtheorem{remark}[theorem]{Remark}
\numberwithin{equation}{section}
\newcommand{\R}{\mathbb{R}}
\renewcommand{\S}{\mathbb{S}}
\newcommand{\C}{\mathbb{C}}
\newcommand{\eps}{\varepsilon}
\newcommand{\vphi}{\varphi}
\DeclareMathOperator{\Bal}{Bal}
\DeclareMathOperator{\diverg}{div}
\renewcommand{\div}{\diverg}
\DeclareMathOperator{\grad}{grad}
\DeclareMathOperator{\supp}{supp}
\DeclareMathOperator{\IM}{Im}
\DeclareMathOperator{\m}{m}
\renewcommand{\Im}{\IM}
\title{An equilibrium problem on the sphere with two equal charges}
\author{Juan G. Criado del Rey and Arno B.J. Kuijlaars}
\date{\today}
\begin{document}

\maketitle

\begin{abstract}
We study the equilibrium measure on the two dimensional
sphere in the presence of  an external field generated by 
two equal point charges. The support of the equilibrium
measure is known as the droplet. Brauchart et al.\ showed
that the complement of the droplet consists of two
spherical caps when the charges are small.
When the charges are bigger the droplet becomes
simply connected and we prove that the boundary of
the droplet is mapped by stereographic projection to 
an ellipse in the plane. 

Moreover, we compute a mother body for the droplet that 
we derive from an equilibrium problem with a weakly 
admissible  external field on the real line.
\end{abstract}

\section{Introduction and statement of results}

\subsection{Main result}
Let us denote by $\S^2 = \{x\in\R^3 : \|x\| = 1\}$ the two dimensional
unit sphere, where $\|\cdot\|$ stands for the usual Euclidean norm.
We use $\lambda$ for the Lebesgue measure on  $\S^2$, normalized so that
$\lambda(\S^2) = 1$, and for a closed subset $D \subset \S^2$ we 
write $\lambda_D$ for the Lebesgue measure restricted to $D$.

An \emph{external field} is a function $Q:\S^2\to \R\cup\{+\infty\}$ that is lower semicontinuous and finite at least on a set of 
positive logarithmic capacity. The \emph{weighted logarithmic energy}
in the presence of $Q$ of a probability measure $\mu$ on $\S^2$ 
is given by
\begin{equation}\label{eq:def_energy}
I_Q[\mu] = \iint_{\S^2 \times \S^2}\log\frac{1}{\|x-y\|}d\mu(x)d\mu(y)+2\int_{\S^2} Q(x)d\mu(x).
\end{equation} 
Then $I_Q$ represents the total energy of a large system of 
mutually repelling particles with the external field $Q$ acting 
on them. It is a well known result  (see \cite[Theorem 1.2]{DragnevSaff2007}, but the proof is the same as in
the case of a weighted logarithmic energy problem in the complex plane \cite{SaffTotik1997}) that there is a unique probability measure $\mu_Q$, called the \emph{equilibrium measure}, that minimizes  \eqref{eq:def_energy} among all probability measures on $\S^2$.

The equilibrium measure is characterized by  Frostman-type \cite{Frostman1935} 
variational conditions which say that for some constant $\ell_Q$,
\begin{equation} \label{eq:frostman}
	\begin{cases}
	Q + U^{\mu_Q} \leq \ell_Q & \text{ on } \supp(\mu_Q), \\
	Q + U^{\mu_Q} \geq \ell_Q & \text{ quasi-everywhere on } \S^2,
	\end{cases}
	\end{equation}
where quasi-everywhere means up to a set of zero logarithmic capacity,
and $U^{\mu_Q}$ is the logarithmic potential, which for general
positive measures $\sigma$ is defined by 
\begin{equation}\label{eq:def_potential}
	U^\sigma(x) = \int_{\S^2}\log\frac{1}{\|x-y\|}d\sigma(y).
\end{equation} 
If there is no exceptional set of zero logarithmic capacity then \eqref{eq:frostman}
takes the form
\begin{equation} \label{eq:frostmaneasy}
\begin{cases}
Q + U^{\mu_Q} = \ell_Q & \text{ on } \supp(\mu_Q), \\
Q + U^{\mu_Q} \geq \ell_Q & \text{ on } \S^2,
\end{cases}
\end{equation}
and this will be the situation for the external fields considered in this paper.

The precise determination of the equilibrium measure or its support is usually a very hard problem for a general (and even for a not too trivial) choice of $Q$. The problem becomes somewhat more
tractable if we consider external fields that
are logarithmic potentials \eqref{eq:def_potential} of
highly concentrated measures $\sigma$.
We shall refer to $\sigma$ as a charge distribution and we
use $\mu_{\sigma}$ to denote the equilibrium measure in the
external field $U^{\sigma}$. We are going to determine
$\mu_{\sigma}$ explicitly for the external field
\begin{equation} \label{eq:def_external_field} 
	Q(x) =  U^{\sigma}(x), \qquad \sigma = a \delta_{p_1} + a \delta_{p_2}
\end{equation}
with $p_1, p_2 \in \S^2$ and $a > 0$.

 For discrete charge distributions $\sigma$ like the one in \eqref{eq:def_external_field} it is known that  
\begin{equation} \label{eq:disjoint_supports}
	 \supp(\mu_{\sigma}) \cap \supp(\sigma) = \emptyset 
\end{equation} 
and whenever \eqref{eq:disjoint_supports} holds the equilibrium measure
takes the form 
\begin{equation} \label{eq:musigma} 
	\mu_\sigma = \lambda(D)^{-1}  \lambda_D, \end{equation}
 where $D = \supp(\mu_{\sigma})$ is some compact set called the \emph{droplet}. 
In other words, $\mu_\sigma$ has constant density with respect
to Lebesgue measure on its support and therefore the droplet $D$ 
is all one needs to know in order to solve the equilibrium problem. 
See \cite[Theorem 11]{Gustafsson2018}\footnote{It is Theorem 6.3 in the preprint arXiv:1605.03102.} 
for a general version of this statement in terms of partial balayage
on compact manifolds. Moreover, it follows from the general theory 
that 
\begin{equation}\label{eq:volume_droplet}
	\lambda(D) = \frac{1}{1+ \m(\sigma)},
\end{equation} 
where $\m(\sigma) = \int d\sigma$ is the total mass of $\sigma$.
For ease of reference we give the (simple) proof 
of \eqref{eq:musigma} and \eqref{eq:volume_droplet} in 
the appendix \ref{subsec:musigma_lemma}.

The case $\sigma = a\delta_p$, where $a>0$ and $p\in \S^2$, was 
studied in \cite{Dragnev2002}. For this $\sigma$ the droplet is the
complement of a spherical cap $B$ centered at $p$ with area
\[ 	\lambda(B) = \frac{a}{1+a}. \] 
A clever choice of the parameter $a$ leads in that article to a bound 
on the separation of minimal logarithmic energy points. A similar
technique is employed in \cite{DragnevSaff2007} and \cite{Brauchart2018}
to prove bounds on the separation of minimal Riesz $s$--energy points 
on the $d$--dimensional sphere $\S^d$, and in \cite{CriadoDelRey2019} 
to bound the separation of minimal Green energy points on 
compact manifolds. 

In \cite{Brauchart2018} the authors consider the case where 
$\sigma = \sum\limits_{j=1}^m a_j\delta_{p_j}$, with $a_j > 0$ 
and $p_j \in \S^2$ for every $j$, is a combination of point masses 
(not only for the logarithmic case, but also for the Riesz 
interaction). They prove that if the $a_j$ are sufficiently small, 
then
\begin{equation} \label{eq:spherical_caps} 
	D = \S^2 \setminus \left( \bigcup_{j=1}^m B_j \right) 
	\end{equation}
where $B_j$ is the (open) spherical cap centered at the point $p_j$ 
with area
\[ \lambda(B_j) = \frac{a_j}{1+a_1+\cdots + a_m}.
\] 
The result \eqref{eq:spherical_caps} holds if and only if the spherical caps are mutually disjoint. 

The equilibrium problem becomes more complicated (and maybe more
interesting) when some of the 
$a_j$ grow large enough so that two (or more) of the spherical caps
start to overlap. In such a case the complement of the droplet 
is no longer a union of spherical caps, but some larger set as
in Figure \ref{fig:fig1}.

\begin{figure}[t]
\centering
\includegraphics[width=0.52\textwidth]{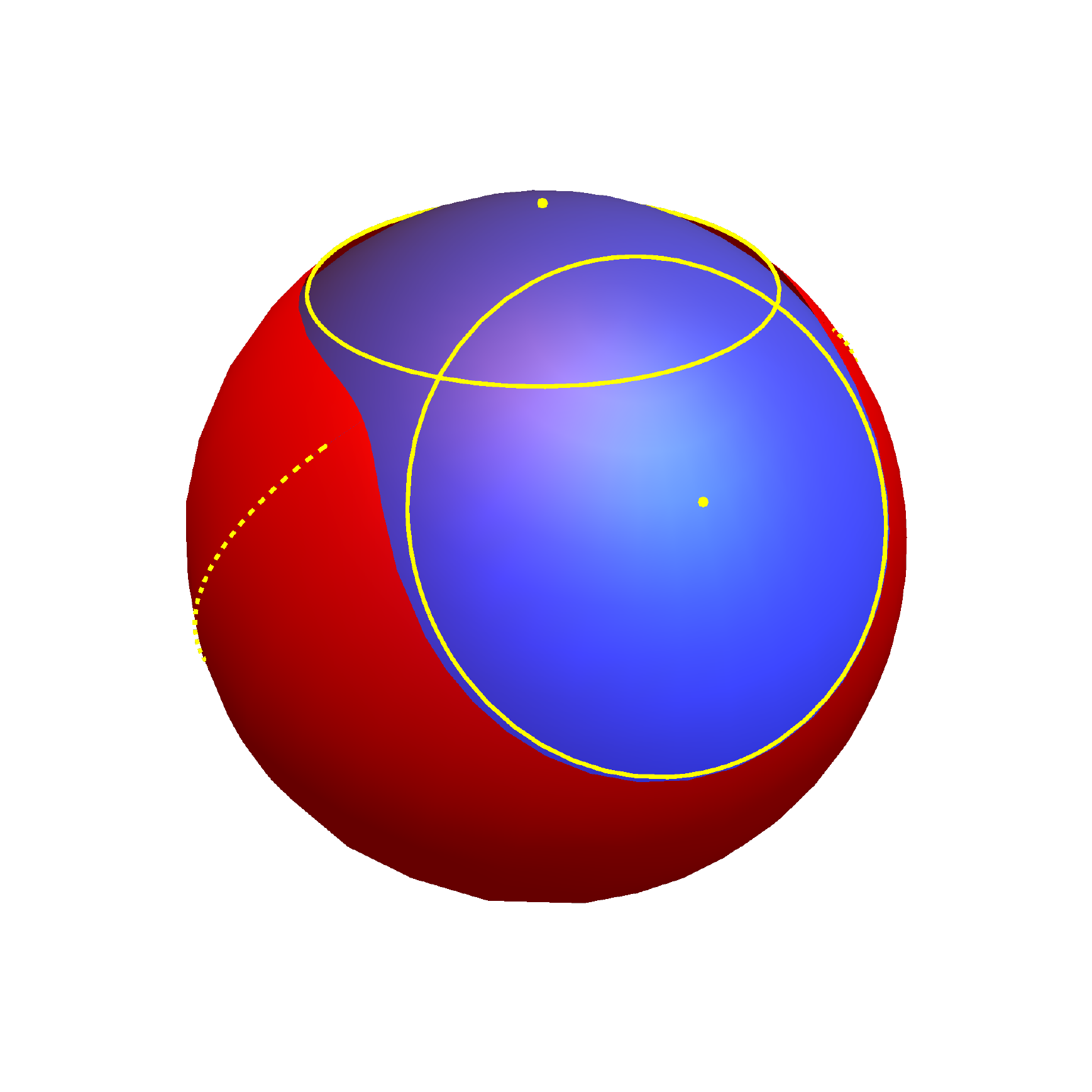}\hspace{-1cm}
\includegraphics[width=0.52\textwidth]{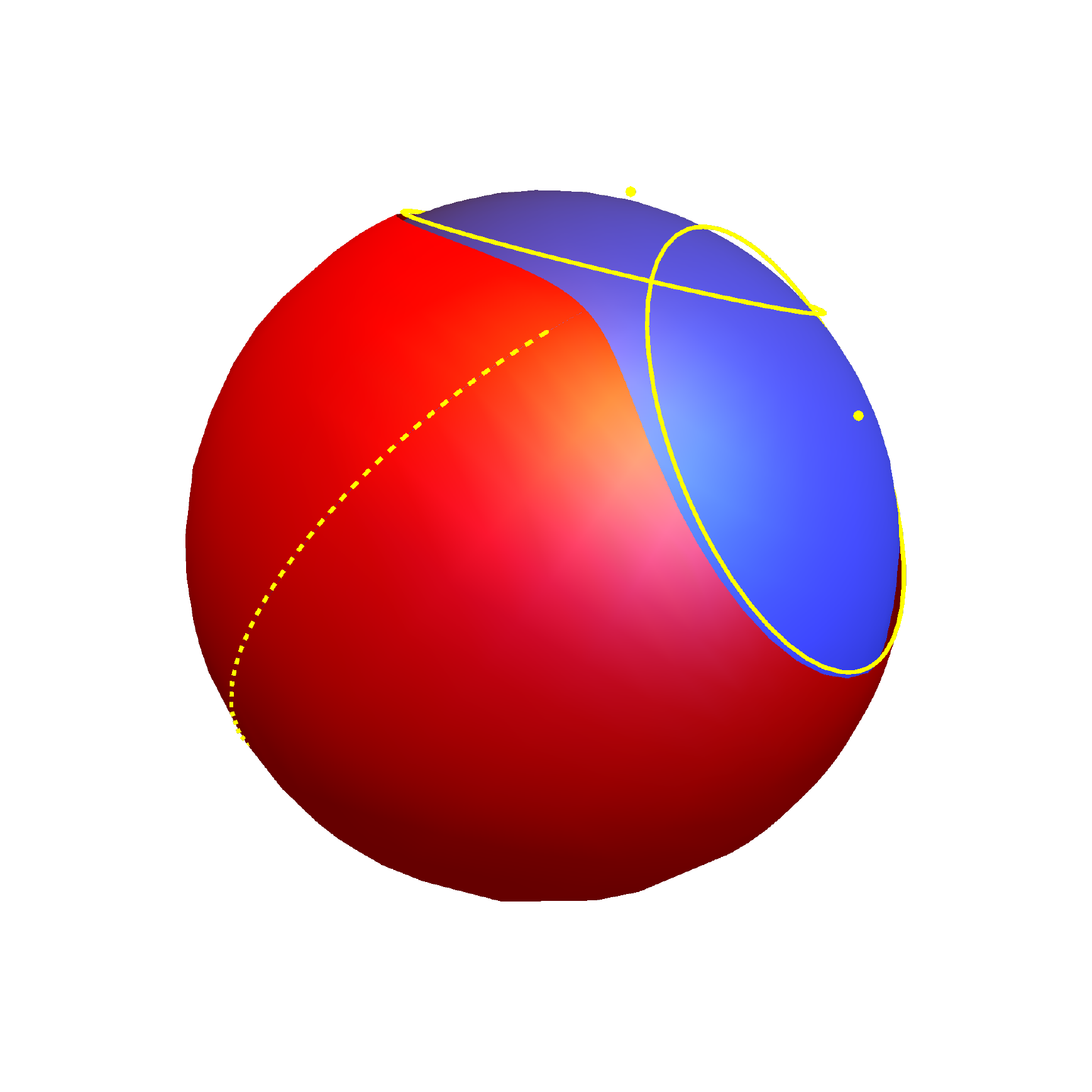}
\caption{Picture of the droplet (red region) obtained using the formula \eqref{eq:equation_ellipse} from two different viewpoints. The 
	spherical caps centered at $p_1$ and $p_2$ with geodesic radii $a/(1+2a)$ are also represented, as well as the support for the mother body (dashed line inside the droplet). Compare this picture with Figure 4 in \cite{Brauchart2018}.}\label{fig:fig1}
\end{figure}

The goal of this paper is to study the symmetric instance of two
point charges, i.e., $\sigma = a_1\delta_{p_1}+a_2\delta_{p_2}$
with $a_1 = a_2 = a$ but the charge $a$ can be arbitrarily large. 
In other words, for us
\begin{equation} \label{eq:Q_two_charges}
Q(x) = U^{\sigma}(x) = a\log\frac{1}{\|x-p_1\|}+a\log\frac{1}{\|x-p_2\|}, \quad a > 0,
\end{equation}
where $p_1$ and $p_2$ are two distinct points on $\S^2$. 

We are going to use the stereographic projection $\phi:\S^2 \to\C \cup \{\infty\}$ given by
\[
\phi(x_1,x_2,x_3) =  \begin{cases} \frac{x_1+ix_2}{1-x_3},
	& \text{for } (x_1, x_2, x_3) \in  \S^2, \, x_3 \neq 1, \\
	\infty, & \text{for } (x_1,x_2,x_3) = (0,0,1). 
	\end{cases}
\] 
We assume, without loss of generality, that $p_1$ and $p_2$ are mapped to two purely imaginary complex numbers 
\begin{equation} \label{eq:ib} 
	\phi(b_1) = ib, \qquad \phi(p_2) = -ib, \qquad \text{with } b \geq 1. 
\end{equation}
We achieve \eqref{eq:ib} by rotating the sphere such
that  $p_1 = (0, x_2, x_3)$ and $p_2 = (0,-x_2,x_3)$ are 
symmetric with respect to the north pole and $x_3 \geq 0$. 

We consider $b$ fixed and study the effect on the droplet when $a$ varies.
In this setting the critical value is
\begin{equation} \label{eq:acr}
	a_{cr} = (b^2-1)^{-1} \end{equation}
in the sense that $a \leq a_{cr}$ corresponds to the situation when the charge $a$ is small enough so that the droplet is the complement of two spherical caps (see \cite[Theorem 1]{Brauchart2018}).
For $a > a_{cr}$ we find the surprising result that
the stereographic projection of the droplet is an \emph{ellipse},
and this is our main result.

\begin{theorem}\label{thm:main} 
Let $Q$ be given by \eqref{eq:Q_two_charges} where $p_1$
and $p_2$ are two points on the unit sphere, that are mapped by
stereographic projection to $ib$ and $-ib$, respectively, with 
$b \geq 1$. Suppose $a > a_{cr} = (b^2-1)^{-1}$. 
Let $D$ be the support of the equilibrium measure
and $\phi(D) = \Omega$. Then $\Omega$ is the compact region 
in the complex plane enclosed by the ellipse with equation
\begin{equation}\label{eq:equation_ellipse}
\frac{2(b^2a-a-1)}{b^2+1}x^2+\frac{2(b^2a+a+1)}{b^2-1}y^2 = 1.
\end{equation}
\end{theorem}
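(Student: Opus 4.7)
My plan is to verify the Frostman variational conditions \eqref{eq:frostmaneasy} directly for the candidate measure $\mu = \lambda(D)^{-1}\lambda_D$ with $D = \phi^{-1}(\Omega)$, where $\Omega$ is the region enclosed by the ellipse \eqref{eq:equation_ellipse}. By \eqref{eq:musigma}--\eqref{eq:volume_droplet} and the uniqueness of the equilibrium measure, verifying these conditions will identify $D$ with $\supp(\mu_\sigma)$. The calculations are cleanest when pulled back to the plane via stereographic projection, using the chordal identity $\|x-y\| = 2|z-w|/\sqrt{(1+|z|^2)(1+|w|^2)}$ and the pullback formula $d\lambda(x) = \pi^{-1}(1+|z|^2)^{-2}\,dA(z)$. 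Then, by \eqref{eq:volume_droplet}, the pullback of $\mu$ has density $\rho(w) = (1+2a)\pi^{-1}(1+|w|^2)^{-2}$ on $\Omega$.

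The first step is harmonicity of $\tilde Q + \widetilde{U^\mu}$ on $\Omega$, where $\tilde f = f\circ\phi^{-1}$. A quick check from the semi-axis lengths in \eqref{eq:equation_ellipse} shows $\pm ib\notin\Omega$, so on $\Omega$ I may apply $\Delta\log(1+|z|^2) = 4(1+|z|^2)^{-2}$ together with $\Delta_z\log|z-w| = 2\pi\delta_w$. The pieces coming from $\tilde Q$ and from $\widetilde{U^\mu}$ contribute, respectively, $4a(1+|z|^2)^{-2}$ and $2(1+|z|^2)^{-2} - 2\pi\rho(z) = -4a(1+|z|^2)^{-2}$, which cancel. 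Hence $\tilde Q + \widetilde{U^\mu}$ is planar-harmonic on $\Omega$, and it only remains to show it is constant on $\partial\Omega$.

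This boundary constancy is the main obstacle. My approach is to construct a \emph{mother body} for $\mu$: a probability measure $\nu$ supported on a segment $\Sigma = [-c,c]$ of the real axis whose planar logarithmic potential agrees with that of the pulled-back $\mu$ on $\C\setminus\Omega$ up to an additive constant. The natural choice is $c$ equal to the focal half-distance of the ellipse $\partial\Omega$, read from $c^2 = \alpha^2 - \beta^2$ for the semi-axes $\alpha,\beta$ in \eqref{eq:equation_ellipse}. The conformal bridge between one- and two-dimensional pictures is the Joukowski map $z = \tfrac{1}{2}(\zeta + c^2/\zeta)$, which sends $\{|\zeta| > c\}$ onto $\C\setminus\Sigma$ and maps circles $|\zeta| = r$ to confocal ellipses with semi-axes $(r\pm c^2/r)/2$; matching those to the values in \eqref{eq:equation_ellipse} fixes the radius $r_0$ corresponding to $\partial\Omega$ and determines all remaining constants. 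The measure $\nu$ is obtained as the equilibrium measure of a one-dimensional, weakly admissible weighted problem on $\R$ built from $\tilde Q|_\R$; the one-dimensional Frostman equality then gives $\tilde Q + U^\nu = \ell$ on $\Sigma$, and analytic continuation of $\tilde Q + U^\nu$ off $\Sigma$ (recognising $\partial\Omega$ as the image of $|\zeta|=r_0$ under Joukowski) upgrades this to $\tilde Q + U^\nu = \ell$ on $\partial\Omega$. The mother body identity then transports the constancy to $\tilde Q + \widetilde{U^\mu} = \ell$ on $\partial\Omega$, and the maximum principle combined with harmonicity gives $Q + U^\mu = \ell$ throughout $D$.

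For the remaining inequality $Q + U^\mu \geq \ell$ on $\S^2\setminus D$, the mother body simplifies matters once more: the one-dimensional Frostman inequality $\tilde Q + U^\nu \geq \ell$ on $\R\setminus\Sigma$ provides boundary data on the real axis, the already-established equality on $\partial\Omega$ provides data on the rest of the boundary of the exterior, and the blow-up of $Q$ near $p_1,p_2$ rules out any loss at the charges. A boundary comparison with the harmonic function $\tilde Q + U^\nu - \ell$ (which is nonnegative off $\Sigma$ and off $\{\pm ib\}$ except inside $\Omega$) yields the required inequality on $\C\setminus\overline\Omega$. The Frostman conditions then force $\mu = \mu_\sigma$, proving the theorem.
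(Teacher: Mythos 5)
Your setup is sound and close in spirit to the paper's: pull everything back to the plane, check that the Laplacian contributions cancel inside $\Omega$ (your computation $4a(1+|z|^2)^{-2} - 4a(1+|z|^2)^{-2}=0$ is correct), and reduce the boundary constancy to a one‑dimensional ``mother body'' measure supported on the focal segment $[-A,A]$ of the ellipse. The identification of the support of that measure with the focal segment is also a genuine feature of the paper (Theorem \ref{thm:equilibrium_muV}). However, the step that carries all the weight --- ``analytic continuation of $\tilde Q + U^{\nu}$ off $\Sigma$ \dots upgrades this to $\tilde Q+U^{\nu}=\ell$ on $\partial\Omega$'' --- is not a valid argument and is false as stated. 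The function $\tilde Q+U^{\nu}$ contains the term $(a+\tfrac12)\log(1+|z|^2)$, which is nowhere harmonic, so there is no analytic continuation to speak of; and even for a genuinely harmonic function, constancy on the focal segment does not propagate to the confocal ellipses (e.g.\ $\Im z$ vanishes on $[-A,A]$ but on no ellipse). The actual mechanism is an identity between Cauchy transforms: one must show that $\int\frac{d\mu_\Omega(s)}{z-s}=\int\frac{d\mu_V(s)}{z-s}$ off $\Omega$ and compute it inside $\Omega$, which the paper does via Green's theorem after establishing that $\partial\Omega$ satisfies $\bar z/(1+|z|^2)=S(z)$ for a ``spherical Schwarz function'' $S$ built from $\mu_V$. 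That in turn rests on the nontrivial algebraic identity $S_0/(1+zS_0)=S$ linking the classical Schwarz function of the ellipse to $\mu_V$; this is precisely where the specific coefficients of \eqref{eq:equation_ellipse} and the constants \eqref{eq:AC_postcrit} are used, and nothing in your proposal performs or replaces this verification. The Joukowski parametrization only reproduces the geometry of confocal ellipses; it does not supply the identity.

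Two further gaps. First, the one‑dimensional external field is not ``built from $\tilde Q|_{\R}$'': the correct field is \eqref{eq:def_V}, whose term $\tfrac{1+a}{2}\log(x^2+b^{-2})$ involves the projections $\pm ib^{-1}$ of the \emph{antipodal} points $-p_1,-p_2$ and arises from balayage onto the great circle $\phi^{-1}(\R)$ (see the precritical argument in Section~\ref{sec:proof_subcrit}); the equilibrium measure of $\widehat Q|_\R$ is a different measure and would not have the mother‑body property. Second, the exterior inequality $Q+U^{\mu}\ge \ell$ on $\S^2\setminus D$ is not established by ``a boundary comparison with the harmonic function $\tilde Q+U^{\nu}-\ell$'' (again not harmonic, and its sign off $\Sigma$ is exactly what needs proof). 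In the paper this inequality requires a separate argument: a monotone family $\Omega(t)$, the balayage identifications of $\rho_t=\partial_t(t\mu_{\Omega(t)})$ and $\omega_t=\partial_t(t\mu_{V(t)})$, the minimum principle for superharmonic functions, and the known validity of the variational conditions at the critical parameter $t_{cr}$ from \cite{Brauchart2018}. As it stands, the proposal identifies the right objects but does not prove either the boundary equality or the global inequality.
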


We are going to prove Theorem \ref{thm:main} by explicitly verifying
the variational conditions \eqref{eq:frostmaneasy}. Namely,
if $\Omega$ is the region enclosed by the ellipse \eqref{eq:equation_ellipse}, and $D = \phi^{-1}(\Omega)$ then
we prove for some constant $\ell_a$,
\begin{equation} \label{eq:frostman_lambdaD}
(1+2a) U^{\lambda_D}(x) + a \log \frac{1}{\|x - p_1\|}
+ a \log \frac{1}{\|x-p_2\|}
\begin{cases} = \ell_a & \text{ on } D, \\
\geq \ell_a & \text{ on } \S^2.
\end{cases}
\end{equation}

For $a = a_{cr}$ the equation \eqref{eq:equation_ellipse} reduces to 
$\frac{4 b^2}{(b^2-1)^2} y^2 = 1$ and
the domain $\Omega$ tends to the horizontal strip 
\begin{equation} \label{eq:Omegacr} 
\Omega_{cr} = \left\{ z \in \C : |\Im z| \leq \frac{b^2-1}{2b} \right\}. 
\end{equation}
as $a \to a_{cr}^+$. Then $D_{cr} = \phi^{-1}(\Omega_{cr})$
is the complement of two spherical caps centered at $p_1$
and $p_2$ that are tangent at the north pole.
The conditions \eqref{eq:frostman_lambdaD} hold in
this critical case, and this follows from the results in \cite{Brauchart2018}.

\subsection{A dual weighted energy problem}

We are able to compute the droplet $D$ because of duality between
$(\sigma, D)$ and $(\sigma^*,D^*)$ where
\[ D^* = \overline{\S^2 \setminus D} \]
and $\sigma^*$ is a measure on $D$ that will be such that
\begin{equation} \label{eq:musigmastar} 
	\mu_{\sigma^*} = \lambda(D^*)^{-1} \lambda_{D^*}, \end{equation}
which is the relation dual to \eqref{eq:musigma}. In addition,
the  measure $\sigma^*$ will be highly concentrated, although not a
finite combination of point masses. It will be supported
on $\phi^{-1}(\R \cup \{\infty\})$ which is the great circle
on $\S^2$ containing those points of $\S^2$ that have equal
distance to $p_1$ and $p_2$, see Figure \ref{fig:fig1}, where the support of $\sigma^*$ is represented by a dashed line inside the droplet.
Because of \eqref{eq:volume_droplet}
and $\lambda(D^*) = 1 - \lambda(D)$, we will have
\begin{equation} \label{eq:masses} 
\begin{aligned}
	\m(\sigma) &= 2a, & \lambda(D) & = \frac{1}{1+2a}, \\
	\m(\sigma^*) &= \frac{1}{2a}, \qquad  & \lambda(D^*) & = \frac{2a}{1+2a}.
	\end{aligned}
\end{equation}
The measure $\sigma^*$ is related to a \emph{mother body}
(sometimes called potential theoretic skeleton \cite{Gustafsson2014}) for the domain $D$ 
in the sense that the relations \eqref{eq:equality_potentials}--\eqref{eq:inequality_potentials}
from the next theorem hold. The identity \eqref{eq:equality_potentials} 
specifies that, in the complement of $D$, the two probability measures
$2a\sigma^*$ and $(1+2a) \lambda_D$ have the same logarithmic
potentials, up to a constant.

\begin{theorem}\label{thm:quadrature}
Let $a>0$ be arbitrary. There is a positive measure $\sigma^*$ 
supported on $\phi^{-1}(\R \cup \infty)$ with total mass $\m(\sigma^*) = \frac{1}{2a}$ such that
for some constant $m$,
\begin{align} 
U^{\lambda_D}(x) = U^{\frac{2a}{1+2a}\sigma^*}(x)+m \qquad & \text{ if }x\in \S^2\setminus D,\label{eq:equality_potentials}\\
U^{\lambda_D}(x) \leq U^{\frac{2a}{1+2a}\sigma^*}(x)+m \qquad & \text{ if }x\in D.\label{eq:inequality_potentials}
\end{align}

The measure $\sigma^*$ is given by  
\begin{equation} \label{eq:sigmastar} 
	\sigma^* = \frac{1}{2a} \left( \phi^{-1}\right)_* \left(\mu_V \right),
\end{equation}
where $\mu_V$ is a probability measure on $\R$ that
will be described in Theorem \ref{thm:equilibrium_muV} below.
Thus $2a \sigma^*$ is the pushforward by the inverse
stereographic projection $\phi^{-1}$ of $\mu_V$.
\end{theorem}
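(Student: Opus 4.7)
The plan is to take $\mu_V$ from Theorem \ref{thm:equilibrium_muV} as given and to verify the desired identities by transport via stereographic projection. Defining $\sigma^* := \tfrac{1}{2a}(\phi^{-1})_*\mu_V$, the mass condition $\m(\sigma^*)=1/(2a)$ and the support condition $\supp(\sigma^*)\subset \phi^{-1}(\R\cup\{\infty\})$ follow at once from $\mu_V$ being a probability measure on $\R$.

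The spherical potentials in \eqref{eq:equality_potentials}--\eqref{eq:inequality_potentials} can be rewritten in planar form by means of the chordal distance identity
\[
\|x-y\| = \frac{2|\phi(x)-\phi(y)|}{\sqrt{(1+|\phi(x)|^2)(1+|\phi(y)|^2)}},
\]
which, after integration, yields for any probability measure $\mu$ on $\S^2$ with planar pushforward $\phi_*\mu$ the formula
\[
U^{\mu}(x) = U^{\phi_*\mu}(\phi(x)) + \tfrac12\log(1+|\phi(x)|^2) + c(\mu),
\]
where $c(\mu)$ depends only on $\mu$. Applied to the probability measures $(1+2a)\lambda_D$ and $2a\sigma^*$, whose planar pushforwards are respectively the smooth measure $\widetilde\nu$ with density $(1+2a)/(\pi(1+|w|^2)^2)$ on $\Omega=\phi(D)$ and the measure $\mu_V$ on $\R$, the two $\log(1+|\phi(x)|^2)$ terms carry identical prefactors $1/(2(1+2a))$ and cancel in the difference. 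Consequently \eqref{eq:equality_potentials}--\eqref{eq:inequality_potentials} reduce to the purely planar mother body statement
\[
U^{\widetilde\nu}(z) - U^{\mu_V}(z) = c \text{ on } \C\setminus\Omega, \qquad U^{\widetilde\nu}(z) - U^{\mu_V}(z) \leq c \text{ on } \Omega,
\]
for a single constant $c$ computable from the two values of $c(\mu)$ above.

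The last step is to verify this planar identity from the variational properties of $\mu_V$ supplied by Theorem \ref{thm:equilibrium_muV}. The weakly admissible external field $V$ there is constructed so that Frostman's equilibrium conditions for $\mu_V$ translate directly into the equality $U^{\mu_V}(t)-U^{\widetilde\nu}(t)+c=0$ on all of $\R$: on $\supp(\mu_V)=\R\cap\overline\Omega$ this is the variational equality, and on $\R\setminus\supp(\mu_V)$ it is the saturation that weak admissibility is designed to produce. Since $U^{\mu_V}-U^{\widetilde\nu}+c$ is harmonic on $\C\setminus\overline\Omega$, vanishes at infinity, is symmetric under reflection across $\R$ (both $\widetilde\nu$ and $\mu_V$ being symmetric there), and vanishes on the real trace $\R\setminus\overline\Omega$ of that exterior domain, Schwarz reflection combined with the identity theorem forces it to vanish identically there, giving the outer equality. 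The interior inequality on $\Omega$ is the mother body inequality proper, and in this setting it is extracted from the inequality half of the Frostman conditions for $\mu_V$ off $\supp(\mu_V)$, which after subtracting the explicit form of $V$ transports to the required bound on the planar side. The principal obstacle is the careful handling of the weak admissibility in the plane so that the saturation of the Frostman inequality really does hold on the entire complement of $\supp(\mu_V)$ in $\R$; this is where Theorem \ref{thm:equilibrium_muV} does the essential work, after which the spherical statements follow by the cancellation described above.
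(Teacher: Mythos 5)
Your setup is fine: defining $\sigma^*=\tfrac{1}{2a}(\phi^{-1})_*\mu_V$, transporting potentials via the chordal distance formula, and checking that the $\log(1+|\phi(x)|^2)$ terms cancel correctly reduces the theorem to the planar statement that $U^{\mu_\Omega}-U^{\mu_V}$ is constant on $\C\setminus\Omega$ and $\leq$ that constant on $\Omega$ (this is exactly \eqref{eq:UmuO_outside} of Proposition \ref{prop:UmuO}). The gap is in the last step, which is where the entire difficulty of the theorem lives. You assert that the Frostman conditions for $\mu_V$ ``translate directly'' into $U^{\mu_V}-U^{\widetilde\nu}+c=0$ on $\R$, but the Frostman conditions for $\mu_V$ involve only the external field $V$, which is built from point charges at $\pm ib$ and $\pm ib^{-1}$; they say nothing about the potential $U^{\widetilde\nu}$ of the spherical-area measure on the ellipse region $\Omega$. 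Linking these two objects is precisely what requires work: the paper does it by showing that the function $S(z)=\frac{1}{1+2a}\bigl[\frac{2az}{z^2+b^2}+F_{\mu_V}(z)\bigr]$ is the \emph{spherical Schwarz function} of $\partial\Omega$ (the ``miraculous'' identity \eqref{eq:Schwarz_identity}, verified against the explicit ellipse parameters and the formulas \eqref{eq:AC_postcrit} for $A$ and $C$), and then converting area integrals over $\Omega$ into boundary integrals via Green's theorem and residues (Lemma \ref{lem:FmuO_equality}). Without some version of this computation your claimed identity on $\R$ is unsupported. A related slip: for $a>a_{cr}$ one has $\supp(\mu_V)=[-A,A]$ with $\pm A$ the \emph{foci} of the ellipse, which is strictly smaller than $\R\cap\overline\Omega$, so your identification of the two sets is incorrect.

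The interior inequality has the same problem in sharper form. The Frostman inequality for $\mu_V$ off its support is a statement on a one-dimensional set ($\R\setminus[-A,A]$), whereas \eqref{eq:inequality_potentials} is an inequality on the two-dimensional region $D$; no amount of ``subtracting the explicit form of $V$'' turns the former into the latter. The paper proves the inequalities by a genuinely different mechanism: it introduces the increasing families $t\mu_{\Omega(t)}$ and $t\mu_{V(t)}$ with $t=\frac{1}{1+2a}$, identifies their derivatives $\rho_t$ and $\omega_t$ as balayage measures, and applies the minimum principle for superharmonic functions before integrating in $t$ (Section \ref{sec:inequalities}). Your Schwarz-reflection argument for the exterior equality could in principle be repaired (a harmonic function symmetric in $\bar z$ that vanishes on a real segment has vanishing Cauchy data there, so unique continuation applies), but only once the vanishing on $\R\setminus\overline\Omega$ has actually been established --- and that is the missing content, not a formality.
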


The properties \eqref{eq:equality_potentials}-\eqref{eq:inequality_potentials}
of $\sigma^*$ express that the complement of the droplet solves the weighted energy problem for the measure $\sigma^*$.
\begin{corollary} \label{cor:mother_body}
	We have $\mu_{\sigma^*} = \lambda(D^*)^{-1} \lambda_{D^*}$.
	That is, if we set $Q^*(x) = U^{\sigma^*}(x)$ and $D^* = \overline{\S^2\setminus D}$, then the probability measure $\mu_{\sigma^*} = \lambda(D^*)^{-1}\lambda_{D^*}$ minimizes the weighted energy functional $I_{Q^*}[\mu]$.
\end{corollary}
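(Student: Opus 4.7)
The plan is to verify the Frostman variational conditions \eqref{eq:frostmaneasy} directly for the candidate measure $\lambda(D^*)^{-1}\lambda_{D^*}$ in the external field $Q^*=U^{\sigma^*}$, and then invoke uniqueness of the equilibrium measure. The key auxiliary observation I would use is that since the normalized Lebesgue measure $\lambda$ on $\S^2$ is rotation-invariant, its logarithmic potential $U^{\lambda}$ is constant on $\S^2$; call this constant $c$. Because $\lambda=\lambda_D+\lambda_{D^*}$ (the overlap $\partial D$ carrying no Lebesgue area), this yields the pointwise identity
\[
U^{\lambda_{D^*}}(x)=c-U^{\lambda_D}(x), \qquad x\in\S^2.
\]

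Using $\lambda(D^*)=\tfrac{2a}{1+2a}$ from \eqref{eq:masses}, the candidate potential can therefore be rewritten as
\[
U^{\lambda(D^*)^{-1}\lambda_{D^*}}(x)=\frac{1+2a}{2a}\bigl(c-U^{\lambda_D}(x)\bigr).
\]
Multiplying \eqref{eq:equality_potentials}--\eqref{eq:inequality_potentials} by $-\tfrac{1+2a}{2a}$ (which reverses the inequality) and then adding $U^{\sigma^*}(x)+\tfrac{(1+2a)c}{2a}$ to both sides, I obtain, with $\ell:=\tfrac{1+2a}{2a}(c-m)$,
\[
Q^*(x)+U^{\lambda(D^*)^{-1}\lambda_{D^*}}(x)\begin{cases}=\ell & \text{on } \S^2\setminus D,\\ \geq\ell & \text{on } D.\end{cases}
\]

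The support of $\lambda(D^*)^{-1}\lambda_{D^*}$ is exactly $D^*=\overline{\S^2\setminus D}$. Since $\sigma^*$ is supported on the great circle $\phi^{-1}(\R\cup\{\infty\})$ which lies in the interior of $D$ (see Figure \ref{fig:fig1}), the potential $U^{\sigma^*}$ is continuous on $\partial D$; $U^{\lambda_{D^*}}$ is continuous everywhere since $\lambda_{D^*}$ has bounded density with respect to $\lambda$. Hence the equality on the open set $\S^2\setminus D$ extends by continuity to its closure $D^*$, and combined with the inequality on $D$ we conclude that
\[
Q^*+U^{\lambda(D^*)^{-1}\lambda_{D^*}}=\ell \text{ on } D^*, \qquad Q^*+U^{\lambda(D^*)^{-1}\lambda_{D^*}}\geq\ell \text{ on } \S^2.
\]
These are precisely the Frostman conditions \eqref{eq:frostmaneasy}, so by uniqueness of the minimizer of $I_{Q^*}$ one has $\mu_{\sigma^*}=\lambda(D^*)^{-1}\lambda_{D^*}$.

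I do not anticipate a serious obstacle here: the corollary is essentially a linear rearrangement of the two potential-theoretic relations supplied by Theorem \ref{thm:quadrature}, once one exploits the rotation-invariance of $\lambda$. The only mildly delicate point is the extension of equality from $\S^2\setminus D$ to $D^*$, which however follows immediately from continuity of the relevant potentials on $\partial D$.
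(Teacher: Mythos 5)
Your proof is correct and follows essentially the same route as the paper: both verify the Frostman conditions \eqref{eq:frostmaneasy} for $\lambda(D^*)^{-1}\lambda_{D^*}$ by combining \eqref{eq:equality_potentials}--\eqref{eq:inequality_potentials} with the constancy of $U^{\lambda}=U^{\lambda_D}+U^{\lambda_{D^*}}$ and then invoke uniqueness of the equilibrium measure. Your explicit continuity argument for extending the equality from $\S^2\setminus D$ to $D^*$ is a small point of extra care that the paper leaves implicit.
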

\begin{proof}
From \eqref{eq:equality_potentials}, \eqref{eq:masses},
and the fact that $U^{\lambda} = \ell_0$ is constant on $\S^2$, we find
\begin{align}  \label{eq:varconditions_sigmastar}
	U^{\sigma^*} + U^{\lambda(D^*)^{-1} \lambda_{D^*}}
	& = \frac{1+2a}{2a} \left( U^{\lambda_D}  - m \right)  
		+ \frac{1+2a}{2a} U^{\lambda_{D^*}}  \qquad \text{ on } D^* \\
	& = \frac{1+2a}{2a} \left( U^{\lambda} -  m \right) 
	= \frac{1+2a}{2a} (\ell_0 - m ),  \nonumber
\end{align}
while  \eqref{eq:inequality_potentials} implies that 
inequality $\geq$  in \eqref{eq:varconditions_sigmastar}
holds on $D$. Thus $\lambda(D^*)^{-1} \lambda_{D^*}$ satisfies the variational conditions 
\eqref{eq:frostmaneasy}  associated with the weighted energy
problem with external field $U^{\sigma^*}$ and the corollary follows.
\end{proof}

\begin{remark} 
	For $\sigma$ and $\lambda_{D^*}$ we have the similar relations,
	for some constant $m^*$,
	\begin{align} 
	U^{\lambda_{D^*}}(x) = U^{\frac{1}{1+2a}\sigma}(x)+m^* \qquad & \text{ if }x\in \S^2\setminus D^*,\label{eq:equality_potentials2}\\
	U^{\lambda_{D^*}}(x) \leq U^{\frac{1}{1+2a}\sigma}(x)+m^* \qquad & \text{ if }x\in D^*, \label{eq:inequality_potentials2}
	\end{align}  
which follows from \eqref{eq:frostman_lambdaD} with similar arguments as in
the proof of Corollary \ref{cor:mother_body}.
Thus \[
\frac{1}{1+2a}\sigma = \frac{a}{1+2a}\delta_{p_1}+\frac{a}{1+2a}\delta_{p_2}
\] is a mother body for the complement of the droplet.
\end{remark}

\begin{remark}
The complement of the droplet can be shown to be a quadrature domain 
for subharmonic functions. This is, for every function $\vphi$ continuous in $\overline{\S^2\setminus D}$ and subharmonic in $\S^2\setminus D$, we have
\begin{equation}\label{eq:subharmonic_quadrature}
\int_{\S^2\setminus D} \vphi\,d\lambda \geq \frac{a}{1+2a}\vphi(p_1)+\frac{a}{1+2a}\vphi(p_2).
\end{equation} 
In particular, if $\vphi$ is harmonic in $\S^2\setminus D$, then \eqref{eq:subharmonic_quadrature} becomes an equality.
The inequality \eqref{eq:subharmonic_quadrature} for subharmonic
functions is essentially a consequence of \eqref{eq:equality_potentials2}--\eqref{eq:inequality_potentials2}.

Likewise, it is a consequence of \eqref{eq:equality_potentials}--\eqref{eq:inequality_potentials}
that the droplet itself is also a quadrature domain in a generalized sense,
namely
\[
\int_D \vphi\,d\lambda \geq \frac{2a}{1+2a}\int_{\phi^{-1}(\R)} \vphi\,d\sigma^*
\] for every $\vphi$ continuous in $D$ and subharmonic in the interior 
of $D$. 
\end{remark}

\subsection{A weakly admissible external field on $\R$}

We obtain $\sigma^*$ from a weakly admissible external
field on the real line. For measures $\mu$ on the real
line we continue to use the notation as in \eqref{eq:def_energy} 
and \eqref{eq:def_potential} for their weighted logarithmic energy
and logarithmic potential,
where now of course the integration is over the real line
and $\|x-y\|$ is simply the absolute value $|x-y|$. 

The equilibrium measure $\mu_V$ in the external
field  $V : \R \to \R \cup \{\infty\}$ is the unique 
minimizer of the energy functional
\[ I_V[\mu]  = 
\iint_{\R \times \R} \log \frac{1}{|x-y|} d\mu(x) d\mu(y)
+ 2 \int_{\R} V(x) d\mu(x) \]
among probability measures on $\R$.
We continue to use $a_{cr}= (b^2-1)^{-1}$ as in \eqref{eq:acr}.

\begin{theorem} \label{thm:equilibrium_muV} 
For $a > 0$ and $b \geq 1$, the equilibrium measure in the external field
\begin{equation} \label{eq:def_V}
	V(x) = \frac{1+a}{2} \log\left(x^2 + b^{-2}\right)
		- \frac{a}{2} \log\left(x^2 + b^2\right)  
	\end{equation}
on $\R$ is the probability measure $\mu_V$ on $\R$, 
which is described as follows:
\begin{itemize}
	\item[\rm (i)] If $a\leq a_{cr}$, then $\mu_V$ is the probability measure supported on the full real line with density
	\begin{equation}\label{eq:density_precrit}
	\frac{d\mu_V}{dx} = \frac{(1+a-b^2a)x^2+b^2+b^2a-a}{\pi b(x^2+b^2)(x^2+b^{-2})}, \qquad x \in \R.
	\end{equation}
	
	\item[\rm (ii)] If $a> a_{cr}$, then $\mu_V$ is the probability measure supported on $[-A,A]\subset \R$ with density
	\begin{equation}\label{eq:density_postcrit}
	\frac{d\mu_V}{dx} = \frac{\sqrt{C}\sqrt{A^2-x^2}}{\pi(x^2+b^2)(x^2+b^{-2})},
	\qquad x \in [-A,A], 
	\end{equation}
	where
	\begin{equation}
	\begin{aligned}
	\label{eq:AC_postcrit}
	A & = \frac{b\sqrt{1+2a}}{\sqrt{b^2a+a+1}\sqrt{b^2a-a-1}}, \\
	C & = \frac{(b^4-1)(b^2a+a+1)(b^2a-a-1)}{b^4}.
	\end{aligned} 
	\end{equation}
\end{itemize}
\end{theorem}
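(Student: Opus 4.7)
The plan is to verify the Frostman variational conditions \eqref{eq:frostmaneasy} directly for the candidate densities and then invoke the uniqueness of the equilibrium measure. A short computation gives
\[
V'(x) = \frac{(1+a)\,x}{x^2+b^{-2}} - \frac{a\,x}{x^2+b^2},
\]
so, after differentiation, the equality part of \eqref{eq:frostmaneasy} amounts to $\operatorname{p.v.}\!\int d\mu_V(t)/(x-t) = V'(x)$ on the support. Two Cauchy-type principal-value integrals drive the whole argument. The first, valid for $c>0$ and $x\in\R$, is
\[
\operatorname{p.v.}\!\int_{-\infty}^{\infty}\frac{dt}{(t^2+c^2)(x-t)} = \frac{\pi x}{c(x^2+c^2)},
\]
immediate from residue calculus. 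The second, for $x\in(-A,A)$, is
\[
\operatorname{p.v.}\!\int_{-A}^{A}\frac{\sqrt{A^2-t^2}\,dt}{(t^2+c^2)(x-t)} = \frac{\pi x\sqrt{A^2+c^2}}{c(x^2+c^2)},
\]
and follows by splitting $[(t^2+c^2)(x-t)]^{-1} = (x^2+c^2)^{-1}[(x-t)^{-1}+(t+x)(t^2+c^2)^{-1}]$ and using the standard evaluations $\operatorname{p.v.}\!\int_{-A}^{A}\sqrt{A^2-t^2}/(x-t)\,dt=\pi x$ together with $\int_{-A}^A\sqrt{A^2-t^2}/(t^2+c^2)\,dt=\pi(\sqrt{A^2+c^2}/c-1)$. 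The same method yields the off-support analogue for $|x|>A$, which contains an additional term $-\operatorname{sgn}(x)\pi\sqrt{x^2-A^2}/(x^2+c^2)$.

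For part (i), the numerator of \eqref{eq:density_precrit} equals $(1-a(b^2-1))\,x^2+(b^2+a(b^2-1))$, and both coefficients are nonnegative precisely when $a\le a_{cr}$, so the density is nonnegative on $\R$. Partial fractions give
\[
\frac{d\mu_V}{dx} = \frac{1}{\pi b}\left[\frac{1+a}{x^2+b^{-2}} - \frac{a\,b^2}{x^2+b^2}\right],
\]
which integrates to $1$ via $\int_\R dt/(t^2+c^2)=\pi/c$. Applying the first Cauchy formula termwise gives $\operatorname{p.v.}\!\int d\mu_V(t)/(x-t) = V'(x)$ on all of $\R$. Since $\supp\mu_V=\R$ and $V(x)-\log|x|\to 0$ at infinity (so $V$ is weakly admissible), no further inequality is needed and uniqueness of the equilibrium measure concludes the case.

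For part (ii), positivity of the density on $[-A,A]$ is immediate from $C>0$, and normalization $\int\rho\,dx=1$ follows from the substitution $t=A\sin\theta$ together with $\int_{-\pi/2}^{\pi/2}(\alpha^2\sin^2\theta+\beta^2\cos^2\theta)^{-1}d\theta=\pi/(\alpha\beta)$. Decomposing
\[
\frac{1}{(t^2+b^2)(t^2+b^{-2})} = \frac{1}{b^2-b^{-2}}\left[\frac{1}{t^2+b^{-2}} - \frac{1}{t^2+b^2}\right]
\]
and applying the second Cauchy formula, the variational equality on $(-A,A)$ reduces to the two algebraic identities
\[
\frac{\sqrt{C}\,b\sqrt{A^2+b^{-2}}}{b^2-b^{-2}} = 1+a, \qquad \frac{\sqrt{C}\sqrt{A^2+b^2}}{b\,(b^2-b^{-2})} = a,
\]
which follow from \eqref{eq:AC_postcrit} via the factorizations $A^2+b^{-2}=(a+1)^2(b^4-1)/[b^2(b^2a+a+1)(b^2a-a-1)]$ and $A^2+b^2=a^2b^2(b^4-1)/[(b^2a+a+1)(b^2a-a-1)]$. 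Finally, for the Frostman inequality on $\R\setminus[-A,A]$, the off-support formula introduces an extra $\sqrt{x^2-A^2}$ contribution that does not cancel against $V'$; collecting everything gives
\[
\frac{d}{dx}\bigl(U^{\mu_V}(x)+V(x)\bigr) = \operatorname{sgn}(x)\,\frac{\sqrt{C}\sqrt{x^2-A^2}}{(x^2+b^2)(x^2+b^{-2})}, \qquad |x|>A,
\]
which is positive on $(A,\infty)$ and negative on $(-\infty,-A)$, so $U^{\mu_V}+V$ attains its minimum on $[-A,A]$, whence $U^{\mu_V}+V\ge F$ throughout $\R$.

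The argument is bookkeeping-heavy but essentially mechanical; the only genuinely non-trivial step is the algebraic matching in part (ii). The values of $A$ and $C$ in \eqref{eq:AC_postcrit} are engineered precisely so that $A^2+b^{\pm 2}$ become perfect squares up to the common factor $(b^2a+a+1)(b^2a-a-1)$, which is exactly what is needed to make the partial-fraction coefficients reproduce $V'$ on the support.
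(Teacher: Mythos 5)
Your proposal is correct, and it takes a genuinely different route from the paper. The paper does not verify the Frostman conditions for the candidate densities at all: for case (i) it simply cites Orive, S\'anchez Lara and Wielonsky, and for case (ii) it imports from that reference the fact that $\supp\mu_V$ is an interval $[-A,A]$, and then \emph{derives} the density via Schiffer variations --- showing that $R(z)=\bigl(F_{\mu_V}(z)-V'(z)\bigr)^2$ is a rational function with double poles only at $\pm ib,\pm ib^{-1}$, ruling out extra zeros by a parity/multiplicity count, and fixing $A$ and $C$ by matching the residues of $R$ against those of $(V')^2$ at the poles. You instead run a guess-and-verify argument: compute the principal-value Cauchy transforms of the stated densities in closed form, check $\mathrm{p.v.}\!\int d\mu_V(t)/(x-t)=V'(x)$ on the support and the sign of $\frac{d}{dx}(U^{\mu_V}+V)$ off it, and invoke uniqueness. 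Your computations check out (I verified the two key p.v.\ integrals, the partial-fraction form of the precritical density, the normalizations, and the factorizations of $A^2+b^{\pm2}$). What your approach buys is self-containedness --- it re-proves case (i) and never needs the external input that the support is an interval; what it loses is any explanation of where $A$ and $C$ come from, which the paper's residue computation supplies. Two small points worth making explicit if you write this up: (1) the converse Frostman principle you rely on (a finite-energy probability measure satisfying the variational conditions is the minimizer) needs the weakly admissible version from the Simeonov/Orive theory, since $V(x)-\log|x|\to 0$; and (2) differentiating the equality condition recovers $U^{\mu_V}+V=\mathrm{const}$ only because the support is connected, which holds here but should be said.
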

In case $a > a_{cr}$ it can be checked that $\pm A$ 
where $A$ is given by \eqref{eq:AC_postcrit} are
the foci of the ellipse \eqref{eq:equation_ellipse}.
If $a = a_{cr}$, then \eqref{eq:density_precrit} reduces to
\begin{equation}\label{eq:density_crit}
\frac{d\mu_V}{dx} = \frac{b^2+1}{\pi b(x^2+b^2)(x^2+b^{-2})},
\qquad x \in \R,
\end{equation} and it can be checked that this is the limit of \eqref{eq:density_postcrit} as $a \to a_{cr}^+$.

The external field \eqref{eq:def_V} is weakly admissible,
since $V(x) - \frac{1}{2} \log(1+x^2)$ has a finite limit
as $|x| \to \infty$.  For admissible external fields this limit
is $+\infty$, and this is the main focus of the monograph 
\cite{SaffTotik1997} and many other works.
The study of weakly admissible external fields started with
Simeonov \cite{Simeonov2005},  see also the more recent
papers \cite{HardyKuijlaars2012}, \cite{Orive2019}.

The external fields \eqref{eq:def_V} belong to the external fields
studied in \cite{Orive2019}. The transition
at $a_{cr} = (b^2-1)^{-1}$ is contained in  
\cite[Theorem 3.14]{Orive2019}, where it is
shown (in a different, but equivalent, setup) 
that the support of $\mu_V$
is a bounded interval $[-A,A]$ if and only if $a > a_{cr}$
and the support is the full real line otherwise. In the latter
case, the equilibrium measure is a combination of balayage measures,
namely
\begin{equation} \label{eq:balayage} 
	\mu_V =	(1+a) \widehat{\delta}_{i b^{-1}} - a \widehat{\delta}_{ib} 
		\qquad \text{ in case } a \leq a_{cr},
	\end{equation}
where $\widehat{\delta}_{ic} = \Bal(\delta_{ic}, \R)$
is the balayage of the point mass at $ic$, $c > 0$,  to the real line.
It is known that this balayage has a (rescaled) Cauchy density
\[ d\widehat{\delta}_{ic}(x) = \frac{c}{\pi (x^2+c^2)} dx \]
and this indeed leads to the density \eqref{eq:density_precrit} for the
equilibrium measure $\mu_V$ in case $a \leq a_{cr}$.

For $a > a_{cr}$ the density of the right-hand side of \eqref{eq:balayage} 
becomes negative near infinity and it is no longer the equilibrium measure.

\subsection{Overview}
The rest of the paper is organized as follows.

The precritical case $a \leq a_{cr}$ is discussed in
section \ref{sec:proof_subcrit}.  Theorem \ref{thm:quadrature}
is a new result in this case as well, and it will be instructive 
to see from its proof how the equilibrium problem
with  external field $V$ arises in this case.

In section \ref{sec:Theorem16} we prove Theorem \ref{thm:equilibrium_muV} about the equilibrium measure
$\mu_V$ in particular for the case $a > a_{cr}$. We use
the technique of Schiffer variations that has been
used before, for example in \cite{MartinezRakhmanov2011}, but we
give a detailed exposition.

To prepare for the proof of  Theorems \ref{thm:main}
and \ref{thm:quadrature} in case $a > a_{cr}$, we first
collect a number of auxiliary results in section \ref{sec:preparations} about the transformation of logarithmic
potentials under stereographic projection.
It allows us to reformulate  Theorems \ref{thm:main}
and \ref{thm:quadrature} in terms of equalities and
inequalities for the logarithmic potentials of $\mu_V$
and a measure $\mu_{\Omega}$ on $\Omega$, see 
Proposition \ref{prop:UmuO}. 

In sections \ref{sec:equalities} and \ref{sec:inequalities}
we prove these equalities and inequalities.
The proof of equalities in section \ref{sec:equalities}
makes use of a function $S(z)$ that we construct out
of the equilibrium measure $\mu_V$ and that we call 
the spherical Schwarz function for the ellipse, since
$\partial \Omega$ will be characterized by the equation
\[ \partial \Omega : \quad \frac{\bar{z}}{1+|z|^2}
	= S(z), \]
see \eqref{eq:Omega_spherical_Schwarz}. The proof is by
a reduction to the usual Schwarz function, but it is largely
computational and we do not have a conceptual proof why the connection
should hold. However, it is the
key result that connects $\mu_V$ with the ellipse $\partial \Omega$.
The equalities of Proposition \ref{prop:UmuO} then follow
from standard calculations around Schwarz functions where
we use the complex Green's theorem
\begin{equation} \label{eq:Green}
	 \frac{1}{\pi} \int_{\Omega} f(z)dA(z) 
	= \frac{1}{2\pi i} \oint_{\partial \Omega}
		\frac{\partial f}{\partial \bar{z}}  dz,
		\end{equation}
where $dA$ denotes Lebesgue measure in the plane,
to transform integrals over $\Omega$ to integrals over 
its boundary.

In section \ref{sec:equalities} we prove the inequalities in Proposition \ref{prop:UmuO}. Here we use a dynamical picture 
that could be of independent interest. 
We analyze the way that $\mu_{\Omega}$ and $\mu_V$
evolve in terms of a time parameter $t = \frac{1}{1+2a}$.
Denoting the $t$-dependent quantities by $\Omega(t)$ and $V(t)$,
we find that both $(t\mu_{\Omega(t)})_t$ and $(t \mu_{V(t)})_t$
are increasing families of measures. By taking derivatives
with respect to $t$ we find measures $\rho_t = \frac{\partial}{\partial t}(t \mu_{\Omega(t)})$
and $\omega_t = \frac{\partial}{\partial t}(t \mu_{V(t)})$
that determine the evolution. This is similar to work
of Buyarov and Rakhmanov \cite{BuyarovRakhmanov1999}
who considered such derivatives for  equilibrium  measures
with varying masses. 

The measure $\rho_t$ is supported on $\partial \Omega(t)$
and describes the growth of the ellipse which is comparable
to Laplacian growth, see e.g.\ \cite{Gustafsson2014}.
In our situation the projections $\pm ib$ of the points
$p_1$ and $p_2$ act as repellers for the growth of the droplet
in a somewhat similar way as in \cite{Balogh2015} for usual
Laplacian growth. 

As in \cite{Balogh2015} our model also allows a natural discretization
that can be analyzed by polynomials that are orthogonal with
respect to a measure on the complex plane, and the orthogonality
can be rewritten as non-Hermitian orthogonality on a contour.
We plan to to come back to this in a separate publication.

\section{The case $a \leq a_{cr}$: proof of Theorem \ref{thm:quadrature}}
\label{sec:proof_subcrit}

While our main interest is in the case $a > a_{cr}$
we first discuss the case $a \leq a_{cr}$. 
Theorem~\ref{thm:equilibrium_muV} follows from the
results of \cite{Orive2019} as already discussed 
after the statement of the theorem. In particular the
measure $\mu_V$ is the combination of balayage measures
\eqref{eq:balayage}.

\begin{proof}[Proof of Theorem \ref{thm:quadrature} in
	case $a \leq a_{cr}$.]

In case $a \leq a_{cr}$ we know that 
$D = \S^2 \setminus \left( B_1 \cup B_2 \right)$
where $B_1$ and $B_2$ are spherical caps, centered at $p_1$ 
and $p_2$ respectively and
$\lambda(B_1) = \lambda(B_2) = \frac{a}{1+2a}$.
The geodesic radius of $B_1$ and $B_2$ is thus
\begin{equation}  \label{eq:def_ra} 
r_a = \arccos \left(1- \frac{2a}{1+2a} \right). 
\end{equation}

We start by noting that
\begin{align*} 
U^{\lambda_D}(x) & = \int_{\S^2 \setminus (B_1\cup B_2)}
\log \frac{1}{\|x-y\|} d\lambda(y) \\
& = \int_{\S^2 \setminus B_1} \log \frac{1}{\|x-y\|} d\lambda(y) 
- \int_{B_2} \log \frac{1}{\|x-y\|} d\lambda(y) \\
& = \int_{\S^2 \setminus \overline{B_1}} \log \frac{1}{\|x-y\|} d\lambda(y) 	- \int_{B_2} \log \frac{1}{\|x-y\|} d\lambda(y) 
\end{align*}
since $\lambda(\partial B_1) =  0$.
Then we recall the following mean value property for
the logarithmic potential (see for instance \cite[Proposition 3.2]{Beltran2003}): 
If $B = B(p,r)$ is an open spherical cap centered
at $p \in \S^2$ with geodesic radius $r \in (0, \pi)$, then
\begin{equation} \label{eq:mean_value} 
\int_B \log \frac{1}{\|x - y\|} d\lambda(y)
= \lambda(B) U^{\delta_p}(x) + c(r), \qquad x \in \S^2 \setminus B,
\end{equation}
where $c(r) = (1+ \cos(r)) \log \cos \frac{r}{2} 
- \frac{\cos r}{2} + \frac{1}{2}$ is a 
constant depending on $r$ only.
It also follows from \cite[Proposition 3.2]{Beltran2003}
(after a little calculation) that 
\begin{equation} \label{eq:mean_value2} 
\int_B \log \frac{1}{\|x - y\|} d\lambda(y)
< \lambda(B) U^{\delta_p}(x) + c(r), \qquad x \in B.
\end{equation}

We apply \eqref{eq:mean_value} to $B_2 = B(p_2,r_a)$ and 
\eqref{eq:mean_value}--\eqref{eq:mean_value2}
to $\S^2 \setminus \overline{ B_1} = B(-p_1, \pi - r_a)$,
where $-p_1$ is the antipodal point to $p_1$. It follows that
\begin{equation} \label{eq:UlambdaD_on_B1} 
U^{\lambda_D}(x) \begin{cases} = h_1(x),  & x \in B_1, \\
\leq h_1(x), & x \in D, \end{cases} \end{equation}
where
\begin{align} \nonumber  h_1(x) & = 
\lambda\left(\S^2 \setminus \overline{ B_1}\right)
U^{\delta_{-p_1}}(x) - \lambda(B_2) U^{\delta_{p_2}}(x)
+ c(\pi-r_a) - c(r_a), \\
& = \frac{1+a}{1+2a} U^{\delta_{-p_1}}(x)
- \frac{a}{1+2a} U^{\delta_{p_2}}(x)
+ c(\pi - r_a) - c(r_a). \label{eq:def_h1}
\end{align}
Similarly, 
\begin{equation} \label{eq:UlambdaD_on_B2} 
U^{\lambda_D}(x) \begin{cases} = h_2(x), & x \in B_2, \\
\leq h_2(x), & x \in D, \end{cases} \end{equation}
with
\begin{align}  h_2(x) 
& = \frac{1+a}{1+2a} U^{\delta_{-p_2}}(x)
- \frac{a}{1+2a} U^{\delta_{p_1}}(x)
+ c(\pi - r_a) - c(r_a). \label{eq:def_h2}
\end{align}

Let $\gamma \subset \S^2$ be the great circle 
containing all those points that are equidistant to $p_1$ and $p_2$.
Then $\gamma$ separates the sphere into two closed hemispheres
$H_1$ and $H_2$ where $H_1$ contains $p_1$ and $-p_2$ and $H_2$ contains $-p_1$ and $p_2$.
The balayage of a measure $\sigma$ supported on one of the hemispheres
onto $\gamma$ is  the unique measure $\widehat{\sigma}$ supported on $\gamma$ 
with $m(\widehat{\sigma}) = m(\sigma)$ such
that $U^{\widehat{\sigma}} - U^{\sigma}$ is constant on
the other hemisphere. Thus, if  
\begin{equation} \label{eq:eta_balayage} 
\eta = \frac{1+a}{1+2a} \widehat{\delta}_{-p_2} -
\frac{a}{1+2a} \widehat{\delta}_{p_1} 
\end{equation}
then it follows from \eqref{eq:UlambdaD_on_B1} and \eqref{eq:def_h1} that, for some constant $m$,
\begin{equation} \label{eq:Ueta1} 
U^{\lambda_D}(x)  \begin{cases} 
= U^{\eta}(x) + m, & x \in B_1, \\
\leq U^{\eta}(x) + m, & x \in H_1 \cap D.
\end{cases}
\end{equation}
Because of symmetry ($p_1$ and $p_2$ lie symmetric with respect to $\gamma$,
and so do $-p_1$ and $-p_2$, which gives $\widehat{\delta}_{\pm p_1} = 
\widehat{\delta}_{\pm p_2}$), we  also have
\[ \eta = \frac{1+a}{1+2a} \widehat{\delta}_{-p_1} -
\frac{a}{1+2a} \widehat{\delta}_{p_2} \]
and by \eqref{eq:UlambdaD_on_B2} and \eqref{eq:def_h2} 
\begin{equation} \label{eq:Ueta2} 
U^{\lambda_D}(x) \begin{cases} 
= U^{\eta}(x) + m, &  x \in B_2, \\
\leq U^{\eta}(x) + m, &  x \in H_2 \cap D.
\end{cases} 
\end{equation}
with the same constant $m$.

Then putting
\begin{equation}  \label{eq:sigmastar_ansatz}
\sigma^* = \frac{1+2a}{2a} \eta \end{equation}
we see from \eqref{eq:Ueta1} and \eqref{eq:Ueta2} that \eqref{eq:equality_potentials}
and \eqref{eq:inequality_potentials} are satisfied.
Applying the stereographic projection to \eqref{eq:eta_balayage} 
we find, since $\phi(-p_2) = i b^{-1}$ and $\phi(p_1) = i b$,
and $\gamma$ is mapped to $\R$,
\[ (1+2a) \phi_*(\eta) = (1+a)
\Bal\left(\delta_{i b^{-1}}, \R \right)
- a 		\Bal\left(\delta_{i b}, \R\right) \]
which according to  \eqref{eq:balayage} is equal to the equilibrium
measure $\mu_V$, since $a \leq a_{cr}$.
Then \eqref{eq:sigmastar} follows, and since $\mu_V$ is a probability measure it
also follows that $\sigma^*$ is a positive measure supported on $\gamma =
\phi^{-1}(\R \cup \{\infty\})$ with total mass $\frac{1}{2a}$.

This completes the proof of Theorem \ref{thm:quadrature}  
in the case $a \leq a_{cr}$.
\end{proof}

\section{Proof of Theorem \ref{thm:equilibrium_muV}}
\label{sec:Theorem16}

Theorem \ref{thm:equilibrium_muV} is proved by Orive, S\'anchez Lara, and Wielonsky 
\cite{Orive2019} in case $a \leq a_{cr}$, as already noted.
They also showed that for $a > a_{cr}$ the support of $\mu_V$ is an
interval $[-A,A]$. 
Our task here is to compute $A$ and prove that 
the density of $\mu_V$ on $[-A,A]$ is equal to \eqref{eq:density_postcrit}.

The external field \eqref{eq:def_V} extends to an analytic function in 
a neighborhood of the real line that is also denoted by $V$.
Its derivative is a rational function that we consider in
the full complex plane.

The following lemma is not new. It is a variation of Theorem 1.34  
in \cite{Deift1998} which deals with equilibrium problems on a finite
interval instead of the full real line.
The proof in \cite{Deift1998} is based on an analysis of weighted Fekete points. 
The proof below uses the technique of Schiffer variations, which was used
for example in \cite{MartinezOriveRakhmanov2015} for equilibrium
problems on the real line. The technique is also useful for
equilibrium problems on contours in the complex plane
\cite{MartinezRakhmanov2011}, \cite{KuijlaarsSilva2015} as a way to
construct contours with the so-called $S$-property \cite{GoncharRakhmanov1987}, see also \cite{MartinezRakhmanov2016}
and references therein.
Since we feel it deserves to be better known we give a 
self-contained exposition. 

\begin{lemma}\label{lem:rational} 
	With  
	\begin{equation} \label{eq:def_R}
	R(z) :=  \left[V'(z) \right]^2-2\int_\R\frac{V'(z)-V'(s)}{z-s}d\mu_V(s)
	\end{equation}
	the following hold:
	\begin{enumerate}
		\item[\rm (a)] $R$ is a rational function with double poles at $\pm ib$ and $\pm i b^{-1}$ and 
		\begin{equation}\label{eq:schiffer03}
		R(z) = \left[\int_\R\frac{d\mu_V(s)}{z-s}-V'(z)\right]^2, \qquad z \in \C \setminus \supp(\mu_V).
		\end{equation}
		\item[\rm (b)] The equilibrium measure has support
		\begin{equation}\label{eq:sup_muv}
		\supp \mu_V = \overline{\{x\in\R : R(x) < 0\}}
		\end{equation} and density
		\begin{equation}\label{eq:density_muv}
		\frac{d\mu_V(x)}{dx} = \frac{1}{\pi}\sqrt{R^-(x)}, \qquad x\in \supp \mu_V,
		\end{equation} where $R^- = \max(0,-R)$	denotes the negative part of $R$.
	\end{enumerate}
\end{lemma}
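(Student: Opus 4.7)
The plan is to derive a single ``master identity'' via the Schiffer variation technique and then read both (a) and (b) off its consequences. Since $\mu_V$ minimizes $I_V$, pushing forward along an infinitesimal diffeomorphism $s \mapsto s + \eps h(s)$ of $\R$ and imposing vanishing of the first variation yields, for any sufficiently regular test function $h$,
\[
\iint \frac{h(s)-h(t)}{s-t}\,d\mu_V(s)\,d\mu_V(t) = 2\int V'(s)\,h(s)\,d\mu_V(s).
\]
Testing this against the kernel $h(s) = \tfrac{1}{z-s}$ for $z \in \C \setminus (\R \cup \{\pm ib, \pm ib^{-1}\})$, and using the algebraic identity $\tfrac{h(s)-h(t)}{s-t} = \tfrac{1}{(z-s)(z-t)}$, the left-hand side collapses to $F(z)^2$, where $F(z) := \int \tfrac{d\mu_V(s)}{z-s}$ is the Cauchy transform of $\mu_V$. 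This produces the master identity
\[
F(z)^2 = 2\int \frac{V'(s)}{z-s}\,d\mu_V(s), \qquad z \in \C \setminus \supp\mu_V.
\]

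Substituting the master identity into the definition of $R$ collapses it to a perfect square:
\[
R(z) = [V'(z)]^2 - 2V'(z)F(z) + F(z)^2 = \bigl(V'(z)-F(z)\bigr)^2,
\]
which is formula \eqref{eq:schiffer03}. To establish the remaining claim of (a), set $Y(z) := F(z) - V'(z)$. The Euler-Lagrange condition for $\mu_V$, differentiated, reads $F_+(x) + F_-(x) = 2V'(x)$ on the interior of $\supp\mu_V$, equivalently $Y_+ = -Y_-$ there, so $Y^2$ has no jump across the support. Away from the support, $Y$ is analytic apart from the simple poles of $V'$ at $\pm ib, \pm ib^{-1}$; in the postcritical case one checks that $Y$ extends continuously (and in fact vanishes) at the endpoints $\pm A$, and $Y = O(1/z)$ at infinity. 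Consequently $Y^2$ is a rational function on the Riemann sphere whose only singularities are double poles at $\pm ib, \pm ib^{-1}$, which is precisely the structural claim.

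Part (b) follows from Sokhotski-Plemelj. On the interior of the support, $F_\pm(x) = V'(x) \mp i\pi\rho(x)$ with $\rho = d\mu_V/dx$, so $Y_\pm(x) = \mp i\pi\rho(x)$ and $R(x) = Y_\pm(x)^2 = -\pi^2\rho(x)^2 < 0$ wherever $\rho > 0$. On $\R \setminus \supp\mu_V$, both $F$ and $V'$ take real values, so $R = (F-V')^2 \ge 0$. Together these characterize $\supp\mu_V = \overline{\{x \in \R : R(x) < 0\}}$ and give $\rho(x) = \tfrac{1}{\pi}\sqrt{R^-(x)}$.

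The main obstacle is justifying the master identity rigorously in the precritical case $a \le a_{cr}$, when $\supp\mu_V = \R$ is unbounded and $h(s) = 1/(z-s)$ decays only like $1/s$: one must either truncate $h$ and pass to the limit (using, for instance, the explicit balayage representation \eqref{eq:balayage} to control tails) or else by-pass the variational derivation entirely and verify the identity a posteriori via a Fubini/principal-value symmetrization of $\iint d\mu_V(s)\,d\mu_V(t)/\bigl((z-s)(z-t)\bigr)$ combined with the PV Euler-Lagrange equation $V'(s) = \mathrm{PV}\!\int d\mu_V(t)/(s-t)$ on the support. A secondary technical point is checking analyticity of $Y^2$ across the endpoints $\pm A$ in the postcritical case, which reduces to verifying that $Y$ is bounded there (no worse than a removable singularity).
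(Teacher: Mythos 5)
Your proposal is correct and follows essentially the same route as the paper: the Schiffer variation identity tested against $h(s)=1/(z-s)$, completion of the square to get \eqref{eq:schiffer03}, nonnegativity of the square on $\R\setminus\supp\mu_V$, and recovery of the density from boundary values of $R^{1/2}$. The only (minor) divergences are that the paper reads the rational structure of $R$ with double poles at $\pm ib,\pm ib^{-1}$ directly off the definition \eqref{eq:def_R} (since the divided difference $\frac{V'(z)-V'(s)}{z-s}$ is rational in $z$ with poles only at the poles of $V'$), which makes your analytic-continuation argument for $Y^2$ and the endpoint check at $\pm A$ unnecessary, and that it uses the Perron--Stieltjes inversion formula to recover $\mu_V([x_1,x_2])$ rather than a Sokhotski--Plemelj step that presupposes a continuous density.
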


\begin{proof} (a) By \eqref{eq:def_V} 	 we have
	\begin{equation}  \label{eq:Vprime}
	V'(z) = \frac{1+a}{z^2+b^{-2}} - \frac{a}{z^2 + b^2} 
	\end{equation} 
	and then it is clear from \eqref{eq:def_R} that $R$ is a rational function with double poles 
	at $z = \pm ib$ and $z = \pm ib^{-1}$. 
	To prove \eqref{eq:schiffer03}, we apply Schiffer variations which goes as follows. 
	Let $h:\R\to\R$ be a bounded $C^2$ 
	function and following \cite[Lemma 3.1]{MartinezRakhmanov2011},
	we  consider the family of measures $\{\mu_\eps\}_{\eps \in \R}$, 
	given by their action on a continuous function $f$ by
	\[
	\int_\R f(s)d\mu_\eps(s) = \int_\R f(s+\eps h(s))d\mu_0(s).
	\] 
	Since $\mu_0 = \mu_V$ and each $\mu_{\eps}$ is a probability measure  it follows that
	$\eps \mapsto I_V[\mu_{\eps}]$ has its minimum at $\eps = 0$. 
	Observe that
	\begin{align*}
	\iint \log\frac{1}{|t-s|}d\mu_\eps(t)d\mu_\eps(s) &= -\iint \log|t-s+\eps[h(t)-h(s)]|d\mu_V(t)d\mu_V(s)\\
	&= -\iint  \left(\log |t-s|+\log\left|1+\eps\frac{h(t)-h(s)}{t-s}\right|\right)d\mu_V(t)d\mu_V(s)\\
	&= \iint \log\frac{1}{|t-s|}d\mu_V(t)d\mu_V(s)\\
	&\quad-\eps\iint_\R\frac{h(t)-h(s)}{t-s}d\mu_V(t)d\mu_V(s)+o(\eps),
	\end{align*} as $\eps\to 0$, and
	\begin{align*}
	2\int V(s)d\mu_\eps(s) &= \int_\R V(s+\eps h(s))d\mu_V(s)\\
	&= 2\int \left(V(s)+\eps h(s)V'(s)+O(\eps^2)\right)d\mu_V(s)\\
	&= 2\int V(s)d\mu_V(s)+2\eps\int_\R h(s)V'(s)d\mu_V(s) + o(\eps),
	\end{align*} as $\eps\to 0$. Since $I[\mu_{\eps}] + 2 \int V d\mu_{\eps}$ has
	its minimum at $\eps =0$, it follows from the above that
	\begin{equation}\label{eq:schiffer01}
	\iint \frac{h(t)-h(s)}{t-s}d\mu_V(t)d\mu_V(s) = 2\int h(s)V'(s)d\mu_V(s).
	\end{equation}
	By taking real and imaginary parts separately, the identity \eqref{eq:schiffer01} 
	is also satisfied for a complex valued $h:\R\to\C$. In particular, taking
	\[
	h(s) = \frac{1}{z-s}
	\] for some $z\in\C\setminus\R$, we have that
	\begin{align} \nonumber  
	\left(\int \frac{d\mu_V(s)}{z-s}\right)^2 & = 2 \int \frac{V'(s)}{z-s}d\mu_V(s) \\
	&= 2 V'(z) \int \frac{d\mu_V(s)}{z-s} - 2 \int \frac{V'(z)-V'(s)}{z-s} d\mu_V(s).
	\label{eq:schiffer02}
	\end{align} 
	After bringing the first term on the right to the left, and completing the square
	we obtain \eqref{eq:schiffer03} in view of the definition \eqref{eq:def_R} of $R$. 
	
	\medskip
	
	(b) Observe from the square in \eqref{eq:schiffer03} 
	that $R(x) \geq 0$ for every $x \in \R \setminus \supp(\mu_V)$, and therefore we certainly have
	\[ \supp(\mu_V) \subset \overline{\{ x \in \R :
		R(x) < 0\}}. \]
	
	Let us denote by
	\[
	F_{\mu}(z) = \int_\R \frac{d\mu(s)}{z-s}
	\] 
	the Stieltjes transform of a measure $\mu$. If $z$ is in a neighborhood of the real line where $V(z)$ is analytic, then \eqref{eq:schiffer03} reads
	\begin{equation} \label{eq:schiffer04}
	[F_{\mu_V}(z)-V'(z)]^2 = R(z),
	\end{equation}
	from which we obtain
	\begin{equation}\label{eq:stieltjes01}
	F_{\mu_V}(z) = V'(z) -R(z)^{1/2},
	\end{equation} 
	where $R(z)^{1/2}$ denotes the appropriate analytic square root of $R(z)$.  
	Since $\mu_V$ has no atoms and $V'(z)$ is real for real $z$, we can apply the Perron--Stieltjes inversion formula to \eqref{eq:stieltjes01} for an interval $[x_1,x_2]$ with $x_1 < x_2$. 
	We obtain that
	\begin{align}\label{eq:stieltjes02}
	\mu_V([x_1,x_2]) &= 
	\lim_{\delta\to 0^+} \int_{x_1}^{x_2}\Im F_{\mu_V}(x-i\delta) ^{1/2}\,dx\nonumber\\
	& =
	\lim_{\delta\to 0^+}\int_{x_1}^{x_2}\text{Im}\,R(x-i\delta)^{1/2}\,dx\nonumber\\
	&= \frac{1}{\pi}\int_{x_1}^{x_2} \Im R_-^{1/2}(x)\,dx.
	\end{align} 
	Only the values $x\in [x_1,x_2]$ with $R(x)<0$ contribute to the integral in \eqref{eq:stieltjes02}, 
	which gives \eqref{eq:sup_muv} and then \eqref{eq:density_muv} follows from
	\eqref{eq:stieltjes02} as well.
\end{proof}

We are now ready for the proof of Theorem \ref{thm:equilibrium_muV}.

\begin{proof}[Proof of Theorem \ref{thm:equilibrium_muV}.] 
	
	We consider $a > a_{cr}$, since the case $a \leq a_{cr}$
	already follows from \cite{Orive2019}, as already noted.
	
	From \eqref{eq:def_R} and \eqref{eq:Vprime}  we obtain $R(z) = O(z^{-4})$ as $z\to \infty$. 
	Since $R$ has double poles at $\pm i b$ and $\pm i b^{-1}$ (and no other poles), we  
	conclude that
	\[
	R(z) = \frac{pz^4+qz^2+r}{(z^2+b^2)^2(z^2+b^{-2})^2}
	\] 
	for certain real coefficients $p$, $q$ and $r$. Thus $R$ has at most four zeros in $\C$.
	We are in the situation that the support of $\mu_V$ is a finite interval $[-A,A]$.
	We then see from  \eqref{eq:sup_muv} that $R$ has a sign change at $\pm A$, 
	which means that $R$ has a zero of odd order at $\pm A$. The zeros are actually simple,
	since there are no more than four zeros in total (counting  multiplicities), and by
	symmetry, the orders of the zeros in $A$ and $-A$ are the same. 
	
	There are at most two remaining zeros of $R$ in $\C$. Any zero in $\C \setminus \supp(\mu_V)$
	has even order because of the square in the formula \eqref{eq:schiffer03}. By symmetry,
	with any such zero, its negative and its complex conjugate are also zeros of the same
	order.  So there are no zeros in $\C \setminus \supp(\mu_V)$. 
	Since 
	\[ \supp(\mu_V) = [-A,A] = \overline{\{ x \in \R : R(x) < 0 \}} \]
	any zero in $(-A,A)$ must be of even order. This rules out any additional zero,
	except maybe a double zero at $z=0$. But then by \eqref{eq:density_muv}
	the equilibrium density behaves as $\sim c |x|$ as $x \to 0$, for some $c > 0$,
	and this is not possible. Indeed, if the support of the equilibrium
	measure is known then the density can be recovered by solving a
	singular integral equation and from the explicit formulas as
	in \cite[Chapter IV, Theorem 3.2]{SaffTotik1997} one sees that the
	density is real analytic in the interior of its support,
	for a real analytic external field.

	Thus, apart from $-A$, there are no other zeros of $R$ in $\C$, and it follows that $p =0$,
	and
	\begin{equation} \label{eq:schiffer05}
	R(z) = \frac{C(z^2-A^2)}{(z^2+b^2)^2(z^2+b^{-2})^2}
	\end{equation}
	for certain positive constant $C > 0$. From \eqref{eq:schiffer04}
	we find, 
	\begin{equation} \label{eq:schiffer06} 
	\begin{aligned}
	\lim_{z \to \pm ib} (z^2+b^2)^2 R(z) & =
	\lim_{z \to \pm ib} (z^2+b^2)^2 V'(z)^2, \\
	\lim_{z \to \pm ib^{-1}} (z^2+b^{-2})^2 R(z) & =
	\lim_{z \to \pm ib^{-1}} (z^2+b^{-2})^2 V'(z)^2,
	\end{aligned}
	\end{equation}
	which in view of \eqref{eq:schiffer05} and \eqref{eq:Vprime} 
	leads to the equations
	\begin{align*}
	\frac{C(-b^2-A^2)}{(-b^2+b^{-2})^2} & = -a^2b^2, \\
	\frac{C(-b^{-2}-A^2)}{(-b^{-2}+b^2)^2} &= -(1+a)^2 b^{-2}.
	\end{align*} 
	This simplifies to two equations for $C$ and $A^2$, namely
	\begin{align*}
	Cb^4+CA^2b^2 &= a^2(b^4-1)^2,\\
	Cb^4+CA^2b^6 &= (1+a)^2(b^4-1)^2,
	\end{align*} 
	with solution \eqref{eq:AC_postcrit}, since $A > 0$.
	This completes the proof of Theorem \ref{thm:equilibrium_muV} in case $a > a_{cr}$.
\end{proof}

\section{Preparation for the proofs of Theorem \ref{thm:main} and
	\ref{thm:quadrature} in case $a > a_{cr}$}
\label{sec:preparations}

\subsection{Stereographic projection}

The pushforward $\phi_*(\mu)$ of a measure $\mu$ on $\S^2$
is the measure on $\C \cup \{\infty\}$ that 
assigns to a Borel set $B$ the value $\phi_*(\mu)(B) = \mu(\phi^{-1}(B))$.
The pushforward of the normalized Lebesgue measure is
\begin{equation} \label{eq:phi_lambda} 
	\phi_*(\lambda) =  \frac{dA(z)}{\pi(1+|z|^2)^2} 
	\end{equation}
where $dA$ denotes the two dimensional 
Lebesgue measure in the plane, and so for every $D$
\begin{equation} \label{eq:phi_lambdaD}
	\phi_*(\lambda_D) =  \left. \frac{dA(z)}{\pi(1+|z|^2)^2} \right|_{\Omega}, \qquad \text{where } \Omega = \phi(D). 
	\end{equation}

We will need to know how logarithmic potentials and weighted
logarithmic energies transform under the stereographic
projection. This can be computed from the  basic formula  for the  transformation 
of distances. If $x, y \in \S^2$, $z = \phi(x)$, $w = \phi(y)$, then
\[ \| x- y\| = \frac{2 |z - w|}{\sqrt{1+|z|^2} \sqrt{1+|w|^2}}, \]
with proper modification in case $z$ or $w$ is at infinity.
For a measure $\mu$ on $\S^2$ with pushforward measure $\phi_*(\mu)$ we thus have if $z = \phi(x)$,
\begin{align} \nonumber
	U^{\mu}(x) 
		&	= \int \log \frac{\sqrt{1+|z|^2}\sqrt{1+|w|^2}}{2 |z-w|}
			d\phi_*(\mu)(w) \nonumber \\
		& = U^{\phi_*(\mu)}(z)
		+ \frac{\m(\mu)}{2}  \log (1+|z|^2)
			+ \frac{1}{2} \int \log(1+|w|^2) d\phi_*(\mu)(w)
			- \m(\mu) \log 2,
			\label{eq:phi_Umu}
		\end{align}
and
\begin{align} \nonumber
I[\mu] & = \iint \log \frac{\sqrt{1+|z|^2} \sqrt{1+|w|^2}}{2|z-w|} d\phi_*(\mu)(z) d\phi_*(\mu)(w) \nonumber \\
& = I[\phi_*(\mu)] + \m(\mu) \int \log( 1+|z|^2) d\phi_*(\mu)(z) - \m(\mu)^2 \log 2. 
	\label{eq:phi_Imu}
\end{align}

Hence, in the presence of an external field $Q$ we have
(for a probability measure $\mu$)
\begin{equation} \label{eq:phi_IQmu} 
	I_Q[\mu]  = I_{\widehat{Q}}[\phi_*(\mu)] - \log 2 
	\end{equation}
with
\begin{equation}  \label{eq:def_Qstar} 
	\widehat{Q}(z) = Q(\phi^{-1}(z)) + \frac{1}{2} \log(1+ |z|^2).
	\end{equation}
If $\mu$ is the equilibrium measure with external field
$Q$ on $\S^2$ then $\phi_*(\mu)$ is the equilibrium
measure with external field $\widehat{Q}$ on $\C$. Conversely,
if $\mu_V$ is the equilibrium measure with external
field $V$ on $\C$ (or on $\R$) then $(\phi^{-1})_*(\mu_V)$
is the equilibrium measure with external field
\[ V \circ \phi   - \frac{1}{2} \log\left(1 + \phi^2\right)
	\quad \text{on } \S^2 \quad \text{ (or on $\phi^{-1}(\mathbb R)$).} \]
	
\subsection{Transformation of the theorems to $\C$}

Since we prefer to do the calculations in $\C$ rather than on
$\S^2$, we first transform Theorems~\ref{thm:main}
and \ref{thm:quadrature} to statements about logarithmic
potentials in the complex plane.
We continue with the case $a >a_{cr}$.

Let $\partial \Omega$ be the ellipse from Theorem \ref{thm:main}.
and define $\mu_\Omega$  on $\Omega$ by
\begin{equation} \label{eq:muOmega} 
d\mu_\Omega(z) := \left. \frac{1+2a}{\pi}\frac{dA(z)}{(1+|z|^2)^2}\right|_\Omega,
\end{equation}
where $dA(z)$ is the usual area measure on $\C$. 

Theorems \ref{thm:main} and \ref{thm:quadrature} for $a > a_{cr}$ then follow from the following
\begin{proposition} \label{prop:UmuO}
		Let $a >  a_{cr}$. Then
	$\mu_{\Omega}$ is a probability measure on $\Omega$
	whose logarithmic potential satisfies
	\begin{equation} \label{eq:UmuO_outside} 
	U^{\mu_{\Omega}}(z)
	\begin{cases} = U^{\mu_V}(z), & z \in \C \setminus \Omega, \\
	\leq U^{\mu_V}(z),  & z \in \C, 
	\end{cases}
	\end{equation}
	and, with some constant $c$,
	\begin{equation} \label{eq:UmuO_inside}
	U^{\mu_{\Omega}}(z) + a \log \frac{1}{|z^2+b^2|}
	+ \frac{1+2a}{2} \log \left(1+|z|^2 \right) 
	\begin{cases} = c, & z \in \Omega, \\
	\geq c, & z \in \C. 
	\end{cases}
	\end{equation}
\end{proposition}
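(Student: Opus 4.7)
My plan is to prove Proposition \ref{prop:UmuO} via a \emph{spherical Schwarz function} for the ellipse $\partial\Omega$, which is the device that ties $\mu_\Omega$ to the equilibrium measure $\mu_V$. Since $\partial\Omega$ is an ellipse with foci at $\pm A$, its standard Schwarz function
\[
S_{\mathrm{std}}(z)=\frac{\alpha^{2}+\beta^{2}}{A^{2}}z-\frac{2\alpha\beta}{A^{2}}\sqrt{z^{2}-A^{2}}
\]
(semi-axes $\alpha>\beta$, principal branch on $\C\setminus[-A,A]$) is meromorphic on $\C\setminus[-A,A]$ with $S_{\mathrm{std}}(z)=\bar z$ on $\partial\Omega$. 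I would define $S(z):=S_{\mathrm{std}}(z)/(1+zS_{\mathrm{std}}(z))$, which satisfies $S(z)=\bar z/(1+|z|^{2})$ on $\partial\Omega$, and verify that $S$ is meromorphic on $\C\setminus[-A,A]$ with no poles in $\overline\Omega$ (reducing to $1+zS_{\mathrm{std}}(z)\neq 0$ on $\overline\Omega\setminus[-A,A]$, a direct calculation). The crucial computation is to identify the jump of $S$ across $[-A,A]$ with the density of $\mu_V$: matching the explicit formulas from \eqref{eq:equation_ellipse}, \eqref{eq:AC_postcrit}, and \eqref{eq:density_postcrit} should give
\[
\frac{S_{+}(x)-S_{-}(x)}{2\pi i}=-\frac{1}{1+2a}\,\frac{d\mu_V}{dx}(x),\qquad x\in[-A,A].
\]

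Once this jump identity is in hand, the equalities in Proposition \ref{prop:UmuO} follow from standard Schwarz-function manipulations. For $z\in\C\setminus\overline\Omega$, the antiderivative identity $\partial_{\bar w}[\bar w/((z-w)(1+|w|^{2}))]=1/((z-w)(1+|w|^{2})^{2})$ combined with \eqref{eq:Green} yields
\[
\int_\Omega\frac{d\mu_\Omega(w)}{z-w}=\frac{1+2a}{2\pi i}\oint_{\partial\Omega}\frac{S(w)\,dw}{z-w}=\int_{-A}^{A}\frac{d\mu_V(x)}{z-x},
\]
after collapsing $\partial\Omega$ onto the cut. Matching $1/z$ coefficients at $\infty$ forces $\mu_\Omega(\C)=1$, and integrating along paths in the unbounded component gives $U^{\mu_\Omega}=U^{\mu_V}$ on $\C\setminus\Omega$. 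For the equality on $\Omega$ in \eqref{eq:UmuO_inside}, I observe that
\[
h(z):=U^{\mu_\Omega}(z)+a\log|z^{2}+b^{2}|^{-1}+\tfrac{1+2a}{2}\log(1+|z|^{2})
\]
is harmonic on $\Omega$ (the three Laplacians cancel, using $\pm ib\notin\Omega$ for $a>a_{cr}$, checked by substitution into \eqref{eq:equation_ellipse}); on $\partial\Omega$ the outer equality gives $U^{\mu_\Omega}=U^{\mu_V}$, and using the variational equality $V+U^{\mu_V}=\ell_V$ on $[-A,A]$, the explicit form \eqref{eq:def_V} of $V$, and the boundary relation $\bar z=(1+|z|^{2})S(z)$, an algebraic manipulation reduces $h|_{\partial\Omega}$ to a constant; the maximum principle then gives $h\equiv c$ on $\overline\Omega$.

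For the two inequalities, the equality $U^{\mu_\Omega}=U^{\mu_V}$ off $\Omega$ expresses that $\mu_\Omega$ is the partial balayage of $\mu_V$ onto the spherical density $(1+2a)/(\pi(1+|z|^{2})^{2})$ in the Gustafsson--Sakai sense, so the sweep-out principle gives $U^{\mu_\Omega}\leq U^{\mu_V}$ on all of $\C$. For the inequality $\geq c$ in \eqref{eq:UmuO_inside}, I would prove the dual relations \eqref{eq:equality_potentials2}--\eqref{eq:inequality_potentials2} by the same Schwarz-function methodology with the roles of $\sigma$ and $\sigma^{*}$ interchanged; the $\geq c$ inequality off $\Omega$ then follows from these by the algebraic manipulation used in the proof of Corollary \ref{cor:mother_body}. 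The \textbf{main obstacle} is the jump computation identifying $(S_{+}-S_{-})/(2\pi i)$ on $[-A,A]$ with $-(1+2a)^{-1}d\mu_V/dx$: this is what pins down the precise constants of the ellipse \eqref{eq:equation_ellipse}, and as the authors themselves note, this connection is largely algebraic with no transparent conceptual explanation.
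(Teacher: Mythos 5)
Your treatment of the two \emph{equalities} is essentially the paper's own route: the key algebraic fact is the identification of $S_{\mathrm{std}}/(1+zS_{\mathrm{std}})$ with the function built from $\mu_V$ (your jump identity on $[-A,A]$ is, via Plemelj, the same content as the identity \eqref{eq:Schwarz_identity}), followed by Green's theorem and contour deformation. Your boundary-constancy argument for $h$ on $\partial\Omega$ is left as ``an algebraic manipulation,'' but it does go through: on $\partial\Omega$ one has $\partial_z h=\frac{1+2a}{2}\bigl[\frac{\bar z}{1+|z|^2}-S(z)\bigr]=0$ by the spherical Schwarz relation, so $h$ is constant along the (connected) curve and harmonicity plus the maximum principle finishes it. So far this is fine.

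The \emph{inequalities}, however, are where the real work lies, and both of your arguments there are circular. For $U^{\mu_\Omega}\le U^{\mu_V}$ on $\C$: knowing only that the two potentials agree off $\Omega$ does not identify $\mu_\Omega$ as the partial balayage of $\mu_V$; the inequality $U^{\mathrm{Bal}}\le U^{\mu}$ is part of the defining obstacle-problem characterization of partial balayage, so invoking ``the sweep-out principle'' assumes exactly what must be proved. A direct maximum-principle argument also fails, because on $\Omega$ the difference $U^{\mu_V}-U^{\mu_\Omega}$ has Laplacian $-2\pi(\mu_V-\mu_\Omega)$, a genuinely signed measure, so it is neither super- nor subharmonic there. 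Similarly, for the $\ge c$ part of \eqref{eq:UmuO_inside} you propose to first prove \eqref{eq:equality_potentials2}--\eqref{eq:inequality_potentials2}; but in the paper those are \emph{consequences} of \eqref{eq:frostman_lambdaD}, i.e.\ of the very inequality you are trying to establish, and no independent Schwarz-function argument on the unbounded region $\C\setminus\Omega$ (which contains $\pm ib$ and $\infty$) is indicated. The paper's resolution of both difficulties is a dynamical decomposition missing from your proposal: with $t=\frac{1}{1+2a}$ one writes $t\mu_{\Omega(t)}=\int_0^t\rho_s\,ds$ and $t\mu_{V(t)}=\int_0^t\omega_s\,ds$, where $\rho_s$ is supported on the curve $\partial\Omega(s)$ (so $U^{\rho_s}$ is harmonic inside) and $\omega_s$ on $[-A(s),A(s)]$ (so $U^{\omega_s}$ is superharmonic); the minimum principle then applies slice by slice to give $U^{\rho_s}\le U^{\omega_s}$, and integration in $s$ yields the first inequality. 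The second inequality is obtained by integrating the balayage bound $U^{\rho_s}\le\frac12\log|z^2+b^2|^{-1}+c'(s)$ from $t_a$ to $t_{cr}$ and anchoring at the known critical case $t=t_{cr}$ from Brauchart et al. Without some such mechanism (or another genuine argument), your proof of the two inequalities does not close.
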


The proof of Proposition \ref{prop:UmuO} is in sections 5 and 6 below.

To see how Theorems \ref{thm:main} and \ref{thm:quadrature}
follow from Proposition \ref{prop:UmuO}, we put $D = \phi^{-1}(\Omega)$. Under inverse
stereographic projection, the measure
$\mu_{\Omega}$ transforms by \eqref{eq:phi_lambdaD} and \eqref{eq:muOmega} 
to
\begin{equation} \label{eq:phiinv_muO} 
	(\phi^{-1})_*(\mu_{\Omega}) = (1+2a) \lambda_D.
	\end{equation}
Then $\lambda(D) = (1+2a)^{-1}$ since according to
Proposition \ref{prop:UmuO} $\mu_V$ is a probability measure
and then  \eqref{eq:phiinv_muO} is a probability
measure as well.

By \eqref{eq:phi_Umu} we have if $x \in \S^2$ and $z = \phi(x)$,
\begin{equation} \label{eq:UlambdaD} 
	(1+2a) U^{\lambda_D}(x) =
	U^{\mu_{\Omega}}(z) 
		+ \frac{1}{2} \log(1+|z|^2) + C_1 \end{equation}
for some constant $C_1$, and also 
since $\phi_*(\sigma) = a (\delta_{ib} + \delta_{-ib})$, 
\begin{align} \nonumber  U^{\sigma}(x) & = a U^{\delta_{ib} + \delta_{-ib}}(z)
	+ a \log (1+|z|^2) + C_2 \\
	\label{eq:Usigma}
	& = a \log \frac{1}{|z^2+b^2|} + a \log (1+|z|^2) + C_2,
\end{align}
with another constant $C_2$.
Combining \eqref{eq:UlambdaD} and \eqref{eq:Usigma} with
 \eqref{eq:UmuO_inside} we obtain
 \eqref{eq:frostman_lambdaD} which shows that $\mu_{\sigma} = 
 (1+2a) \lambda_D$ and so indeed $D = \phi^{-1}(\Omega)$ is the droplet as claimed in Theorem~\ref{thm:main}.

\medskip

Theorem \ref{thm:quadrature} follows similarly from \eqref{eq:UmuO_outside}. 
We use \eqref{eq:sigmastar} to define $\sigma^*$ in terms
of $\mu_V$. Then $\sigma^*$ is a measure on $\phi^{-1}(\R)$ with total mass
$\m(\sigma) = \frac{1}{2a}$. Transforming the equality
and inequality in \eqref{eq:UmuO_outside} back to the sphere by
means of \eqref{eq:phi_Umu} we obtain 
\eqref{eq:equality_potentials} and \eqref{eq:inequality_potentials}
as required for Theorem \ref{thm:quadrature}.

\section{Proof of the equalities in Proposition \ref{prop:UmuO}}
\label{sec:equalities}

In this section we prove the equalities from
Proposition \ref{prop:UmuO}. The inequalities will be dealt
with in the next section.
	
\subsection{Spherical Schwarz function}

For the proof of the equalities we make use of the function
\begin{equation} \label{eq:spherical_Schwarz} 
	S(z) = \frac{1}{1+2a} \left[ \frac{2az}{z^2+b^2} + \int \frac{d\mu_V(s)}{z-s} \right] \end{equation}
that we call the \emph{spherical Schwarz function} because of the following property.

\begin{proposition} 
	The ellipse $\partial \Omega$ is characterized by the equation
	\begin{equation} \label{eq:Omega_spherical_Schwarz} 
	\partial \Omega : \quad \frac{\bar{z}}{1+|z|^2} = S(z) 
	\end{equation}
\end{proposition}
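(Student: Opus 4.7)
My plan is to translate the equation $\bar z/(1+|z|^2) = S(z)$ into a polynomial identity in the complex variables $(z, \bar z)$ and reduce it to the ellipse equation \eqref{eq:equation_ellipse}. To start, I make $S(z)$ completely explicit using Lemma \ref{lem:rational}(a): outside $[-A,A]$, $F_{\mu_V}(z) = V'(z) - R(z)^{1/2}$ with $R(z) = C(z^2-A^2)/[(z^2+b^2)(z^2+b^{-2})]^2$ and the branch asymptotic to $\sqrt{C}/z^3$ at infinity, so that
\[
S(z) \;=\; \frac{N(z) - \sqrt{C(z^2-A^2)}}{(1+2a)(z^2+b^2)(z^2+b^{-2})}, \qquad N(z) := z\bigl[(1+2a)z^2 + ab^{-2} + (1+a)b^2\bigr].
\]

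Multiplying $(1+|z|^2)S(z) = \bar z$ by the denominator and isolating the radical gives
\[
N(z) - \bar z\, M(z) \;=\; (1+z\bar z)\sqrt{C(z^2-A^2)},
\]
where $M(z) := (1+2a)(z^2+b^2)(z^2+b^{-2}) - zN(z)$ simplifies to $[ab^2+(1+a)b^{-2}]z^2 + (1+2a)$ via the identity $(1+2a)(b^2+b^{-2}) = L + K$ with $L = ab^{-2}+(1+a)b^2$ and $K = ab^2+(1+a)b^{-2}$. Squaring eliminates the radical and gives the polynomial relation $(N - \bar z M)^2 = (1+z\bar z)^2 C(z^2-A^2)$, of bi-degree $(6,2)$ in $(z,\bar z)$. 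The heart of the proof is then the polynomial factorisation
\[
(N - \bar z M)^2 - (1+z\bar z)^2 C(z^2-A^2) \;=\; \tfrac{1+2a}{b^2}(z^2+b^2)(z^2+b^{-2})\, E(z,\bar z),
\]
where $E(z,\bar z) := b^2(1+2a)(z^2+\bar z^2) - 2(b^4 a + a + 1)\,z\bar z + (b^4-1)$ is the ellipse equation \eqref{eq:equation_ellipse} rewritten in complex form. Expanding both sides as quadratics in $\bar z$ and matching the coefficients of $\bar z^0, \bar z^1, \bar z^2$, all three comparisons close through two clean algebraic identities,
\[
K^2 - C = (1+2a)^2, \qquad CA^2 = \frac{(1+2a)(b^4-1)}{b^2},
\]
which follow at once from the explicit formulas \eqref{eq:AC_postcrit} via the factorisation $(b^2 a + a + 1)(b^2 a - a - 1) = b^4 a^2 - (a+1)^2$.

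The principal obstacle is recognising the correct cofactor $\tfrac{1+2a}{b^2}(z^2+b^2)(z^2+b^{-2})$ and verifying that all three coefficient comparisons close consistently; both hinge on the two displayed algebraic identities, which encode the precise choices of $A$ and $C$ from Theorem \ref{thm:equilibrium_muV}. A secondary subtlety is that squaring can introduce extraneous branches, and the cofactor vanishes at $\pm ib, \pm ib^{-1}$ (which for borderline parameters may actually lie on $\partial\Omega$). I resolve the sign ambiguity by evaluating the unsquared identity at the right semi-major vertex $z = p$ on the positive real axis --- where all quantities are real and $\sqrt{C(p^2-A^2)} > 0$ --- and then propagate by continuity along $\partial \Omega$, with a removable-singularity argument at the isolated degenerate points.
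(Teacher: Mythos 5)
Your proof is correct, and it reaches \eqref{eq:Omega_spherical_Schwarz} by a genuinely different route from the paper. The paper introduces the classical Schwarz function $S_0$ of the ellipse via $\bar z = S_0(z)$, reduces \eqref{eq:Omega_spherical_Schwarz} to the univariate identity $S_0/(1+zS_0)=S$ between algebraic functions of $z$, and verifies that identity either by expanding both sides in the form $P(z)+Q(z)(z^2-A^2)^{1/2}$ or, more conceptually, by matching zeros and poles on the Riemann surface of $w^2=z^2-A^2$. You never introduce $S_0$: you clear denominators in $(1+z\bar z)S(z)=\bar z$, isolate and square the radical, and exhibit a bivariate polynomial factorization whose second factor is the Cartesian form of \eqref{eq:equation_ellipse}. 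I checked your three coefficient comparisons in powers of $\bar z$; they do close, precisely via $K^2-C=(1+2a)^2$ and $CA^2=(1+2a)(b^4-1)/b^2$, which is where the specific values \eqref{eq:AC_postcrit} enter --- the same place the ``miracle'' occurs in the paper's computation. Your route has the merit of treating the sign ambiguity explicitly: since the foci $\pm A$ lie strictly inside $\partial\Omega$ (as $p^2=A^2+q^2$), the radical is nonvanishing and continuous on the connected curve $\partial\Omega$, so the ratio of the two sides of the unsquared relation is a constant in $\{\pm1\}$, fixed to $+1$ by the vertex evaluation. Two small caveats, neither fatal: the branch normalization $R^{1/2}\sim\sqrt{C}/z^{3}$ should be stated as the one consistent with $F_{\mu_V}=V'-R^{1/2}$ and the positivity of the density \eqref{eq:density_postcrit}; and after multiplying by $(z^2+b^2)(z^2+b^{-2})$ the equivalence is lost at $z=\pm ib^{-1}$ (where $S$ is actually finite, the apparent pole of \eqref{eq:spherical_Schwarz_meromorphic} cancelling), so the word ``characterized'' strictly requires a one-line check there --- though the paper is equally silent on this and only the forward implication is used in Lemma \ref{lem:FmuO_equality}.
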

\begin{proof}
	If in the equation for an ellipse 	$\frac{x^2}{p^2} + \frac{y^2}{q^2} = 1$, $p > q$, 
	we write $x = \frac{z+\bar{z}}{2}$, $y = \frac{z- \bar{z}}{2i}$, 
	and solve for $\bar{z}$, then we obtain
	\begin{equation} \label{eq:Omega_Schwarz} 
		 \bar{z} = S_0(z), \quad S_0(z) = \frac{(p^2+q^2)z - 2pq (z^2-r^2)^{1/2}}{r^2}, \end{equation}
	where $r^2 = p^2 - q^2$, as the equation for the ellipse. 
	Thus $S_0$ is the well-known usual Schwarz function for
	the ellipse. For the ellipse \eqref{eq:equation_ellipse} 
	from Theorem \ref{thm:main} we have
	\begin{align} \label{eq:Omega_parameters} 
	p^2  =   \frac{b^2+1}{2(b^2a-a-1)}, \qquad q^2  = \frac{b^2-1}{2(b^2a+a+1)}, \qquad r^2 = A^2,
	\end{align}
	with  $A$ as in \eqref{eq:AC_postcrit}. 	
	
	From \eqref{eq:Omega_Schwarz} we find
	\[ \partial \Omega : \quad \frac{\bar{z}}{1+|z|^2} = \frac{S_0(z)}{1+z S_0(z)} \]
	and to obtain \eqref{eq:Omega_spherical_Schwarz} we will have
	to verify that 
	\begin{equation} \label{eq:Schwarz_identity} 	\frac{S_0(z)}{1+zS_0(z)} = S(z). 
	\end{equation}
	The identity \eqref{eq:Schwarz_identity} can be checked by straightforward calculations.
	By  \eqref{eq:spherical_Schwarz}, \eqref{eq:stieltjes01}, \eqref{eq:Vprime}, 	and \eqref{eq:schiffer05}, we have
	\begin{align} \nonumber
	(1+2a) S(z) & = \frac{2az}{z^2+b^2} + V'(z) - R(z)^{1/2} \\
		& = \frac{az}{z^2+b^2} + \frac{(1+a)z}{z^2 + b^{-2}} - 
		\frac{\sqrt{C} (z^2-A^2)^{1/2}}{(z^2+b^2)(z^2+b^{-2})}.
		\label{eq:spherical_Schwarz_meromorphic}
	\end{align} 
	
	Hence both $S(z)$ and $S_0(z)$ are of the form  $P(z) + Q(z) (z^2-A^2)^{1/2}$ 
	with rational $P$ and $Q$.
	To prove \eqref{eq:Schwarz_identity} we expand $(1+zS_0(z)) S(z)$ into this form 
	and verify that it is equal to $S_0(z)$ from \eqref{eq:Omega_Schwarz} with parameters
	\eqref{eq:Omega_parameters}. We also need that  $A$ and $C$ are 
	given by \eqref{eq:AC_postcrit}. It all fits rather miraculously, and
	we obtain \eqref{eq:Schwarz_identity} and then also the proposition.
\end{proof}

\subsection{About the identity  \eqref{eq:Schwarz_identity}}
	
	The identity \eqref{eq:Schwarz_identity} is rather surprising, and indeed it
	is more special than it may seem at first sight. The identity implies of course
	that the zeros and poles of both sides are the same. From \eqref{eq:spherical_Schwarz_meromorphic}
	it follows that $S$ has poles at $z= \pm i b$, while poles of the left-hand side of
	\eqref{eq:Schwarz_identity} can only come from zeros of $z \mapsto 1+z S_0(z)$.
	It is not immediate that $z = \pm ib $ is a zero of $z \mapsto 1 + z S_0(z)$, but it
	can be verified by explicit calculation.
	
	Extending this idea, we can in fact prove \eqref{eq:Schwarz_identity} by
	examining the zeros and poles of $S(z)$, $S_0(z)$ and $1+zS_0(z)$ 
	on the two sheeted Riemann  surface associated with $w^2 = z^2-A^2$.
	Both $S$ and $S_0$ have  analytic continuation to the
	second sheet of this Riemann surface, just by taking the different sign of the 
	square roots in
	\eqref{eq:Omega_Schwarz} and \eqref{eq:spherical_Schwarz_meromorphic}.
	Then the following can be checked:
	\begin{itemize}
	\item[(1)] $S_0$ has simple zeros at  the points
	$z = \pm i \frac{2pq }{r}$ on the first sheet of the Riemann
	surface, simple poles at the two points at infinity, 
	and no other zeros or poles.
	\item[(2)] $z \mapsto 1 + z S_0(z)$ has	
	double poles at the two points at infinity, four simple
	zeros at the points
	\begin{equation} 
	\begin{aligned} 
	\pm i \frac{\sqrt{p^2+q^2+2p^2q^2+2pq \sqrt{1+p^2}\sqrt{1+q^2}}}{r} & \quad \text{ on the first sheet}, \\
	\pm i \frac{\sqrt{p^2+q^2+2p^2q^2-2pq \sqrt{1+p^2}\sqrt{1+q^2}}}{r}
	 &  	\quad \text{ on the second sheet},
	\end{aligned} \label{eq:Spoles}
	\end{equation}
	and no other zeros or poles.
	\item[(3)] $S$ has four simple zeros and four simple poles.
	The poles are at $\pm i b$ on the first sheet
	and $\pm i b^{-1}$ on the second sheet. The zeros are at 
	\[ \pm i \frac{\sqrt{b^4-1}}{b \sqrt{1+2a}}
		\quad \text{ on the first sheet}, \]
	and at the two points at infinity.
	\end{itemize}

	It follows from (1) and (2) that
	\begin{itemize}
		\item[(4)] $z \mapsto \frac{S_0(z)}{1+zS_0(z)}$
		has simple zeros at $\pm i \frac{2pq}{r}$ on the
		first sheet, two simple zeros at the points
		at infinity, and four simple poles at the points
	in \eqref{eq:Spoles}.
	\end{itemize}
	This agrees with the zeros and poles from (3) provided that
    \begin{align*}
    \frac{\sqrt{b^4-1}}{b \sqrt{1+2a}} & = \frac{2pq}{r}, \\
	 b^2 & =  \frac{p^2+q^2+2p^2q^2+2pq \sqrt{1+p^2}\sqrt{1+q^2}}{r^2}, \\
     b^{-2} & = \frac{p^2+q^2+2p^2q^2-2pq \sqrt{1+p^2}\sqrt{1+q^2}}{r^2},
    \end{align*}
    and these identities are indeed consequences of the formulas \eqref{eq:Omega_parameters}
    for $p$, $q$ and $r$.
    
    Thus both sides of \eqref{eq:Schwarz_identity} are meromorphic functions
    on the compact Riemann surface with the same zeros and poles, and as a result
    their ratio is a constant. The constant is one, since 
    both functions behave as $z^{-1} + \mathcal{O}(z^{-2})$
    as $z \to \infty$, as is easy to check from
    \eqref{eq:spherical_Schwarz} and \eqref{eq:Omega_Schwarz}.

\subsection{Proof of the equalities}

\begin{lemma} \label{lem:FmuO_equality} 
	$\mu_\Omega$ is a probability measure on $\Omega$ whose Stieltjes transform satisfies
	\begin{align} \label{eq:muOmega_Stieltjes1}
		\int_{\Omega} \frac{d\mu_{\Omega}(s)}{z-s}	
		 & = \int_{\R} \frac{d\mu_V(s)}{z-s}, &&
		 z \in \C \setminus \Omega, \\ \label{eq:muOmega_Stieltjes2}
		\int_{\Omega} \frac{d\mu_{\Omega}(s)}{z-s} & =
		 - \frac{2az}{z^2+b^2} + \frac{(1+2a) \bar{z}}{1+|z|^2}, 
		 && z \in \Omega. 
		 \end{align}
\end{lemma}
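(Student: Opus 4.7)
The strategy is to convert the area integral defining $F_{\mu_\Omega}(z)$ into a contour integral on $\partial\Omega$ via Green's theorem \eqref{eq:Green}, and then use the spherical Schwarz identity \eqref{eq:Omega_spherical_Schwarz} to rewrite the boundary integrand as a meromorphic function of $s$ whose residues and jump across $[-A,A]$ are explicitly computable from $S$ and hence from $\mu_V$.

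First I would exhibit the antiderivative relation
\[
\frac{\partial}{\partial\bar{s}}\left(\frac{-1}{s(z-s)(1+|s|^2)}\right) = \frac{1}{(z-s)(1+|s|^2)^2},
\]
which follows from $\partial_{\bar{s}}(1+|s|^2)^{-1} = -s/(1+|s|^2)^2$. Setting $G(s) := -1/[s(z-s)(1+|s|^2)]$, Green's theorem applied to $\Omega$ with small disks excised around the singularities of $G$ at $s=0$ and (when $z\in\Omega$) at $s=z$ expresses $F_{\mu_\Omega}(z)$ as $\tfrac{1+2a}{2\pi i}\oint_{\partial\Omega}G\,ds$ plus $\varepsilon\to 0$ residue contributions. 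These evaluate to $+1/z$ from $s=0$ and, in the interior case, $-1/[z(1+|z|^2)]$ from $s=z$; the latter two combine to $\bar{z}/(1+|z|^2)$.

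For the boundary integral I would substitute the identity $1/(1+|s|^2)=1-sS(s)$ valid on $\partial\Omega$, which falls out of \eqref{eq:Omega_spherical_Schwarz} together with $1+sS_0(s)=1/(1-sS(s))$ as derived in the proof of \eqref{eq:Omega_spherical_Schwarz}. This converts $G|_{\partial\Omega}$ into $-1/[s(z-s)] + S(s)/(z-s)$, a meromorphic function of $s$ inside $\Omega\setminus[-A,A]$. Elementary partial fractions give the residues of the rational piece at $s=0$ and $s=z$, while for the $S(s)/(z-s)$ piece I would deform $\partial\Omega$ inward to a tight counterclockwise loop around the cut $[-A,A]$; this deformation is legitimate because the only first-sheet poles of $S$ are $\pm ib$, which lie outside $\Omega$ since $b>q$, and the apparent poles of $S$ at $\pm ib^{-1}$ cancel in the combination $F_{\mu_V}=V'-R^{1/2}$, which is analytic on $\C\setminus[-A,A]$. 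The cut contribution is $-\int_{-A}^{A}(S_+-S_-)\,dx/(z-x)$, and Plemelj--Sokhotski applied to \eqref{eq:spherical_Schwarz} yields $S_+-S_- = -(2\pi i/(1+2a))\,d\mu_V/dx$, so this contribution is exactly $(2\pi i/(1+2a))F_{\mu_V}(z)$.

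Assembling the pieces, in the exterior case the rational residue $-1/z$ cancels the earlier $+1/z$ correction, leaving $F_{\mu_\Omega}(z)=F_{\mu_V}(z)$, which is \eqref{eq:muOmega_Stieltjes1}. In the interior case the additional residue $-S(z)$ combined with $F_{\mu_V}(z)/(1+2a)$ simplifies via \eqref{eq:spherical_Schwarz} to $-2az/[(1+2a)(z^2+b^2)]$, and together with $\bar{z}/(1+|z|^2)$ produces \eqref{eq:muOmega_Stieltjes2} after multiplying by $1+2a$. That $\mu_\Omega$ is a probability measure then follows by sending $z\to\infty$ in \eqref{eq:muOmega_Stieltjes1} and matching the leading $1/z$ coefficients on both sides. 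The main obstacle is sign bookkeeping in the boundary integral: a counterclockwise tight loop around a horizontal branch cut is traversed leftward along the top edge (the interior lies below), producing $-\int_{-A}^{A}(g_+-g_-)\,dx/(z-x)$ rather than $+\int$, and coordinating this sign with the residues at $s=0$, at $s=z$, and along the cut is where the calculation most easily goes astray.
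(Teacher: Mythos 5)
Your argument is correct and rests on the same two pillars as the paper's proof: the complex Green's theorem \eqref{eq:Green} to turn the area integral into a contour integral over $\partial\Omega$, and the spherical Schwarz identity \eqref{eq:Omega_spherical_Schwarz} to make the boundary integrand meromorphic. The differences are purely in execution. The paper takes the antiderivative $\bar{s}/[(z-s)(1+|s|^2)]$, which is smooth except at $s=z$ and turns the boundary integrand directly into $S(s)/(z-s)$; your choice $-1/[s(z-s)(1+|s|^2)]$ forces an extra excision at $s=0$ and an extra rational term $-1/[s(z-s)]$ on the boundary whose residues must then cancel that excision --- it works, but it is more bookkeeping for no gain. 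Second, the paper evaluates $\oint_{\partial\Omega}S(s)(z-s)^{-1}\,ds$ by the residue theorem in the \emph{exterior} region (poles only at $\pm ib$, plus $s=z$ when $z\notin\Omega$, and no contribution at infinity), whereas you deform \emph{inward} to the cut $[-A,A]$ and apply Plemelj--Sokhotski; your jump $S_+-S_-=-\tfrac{2\pi i}{1+2a}\tfrac{d\mu_V}{dx}$ and the orientation sign are right, but the exterior route produces $F_{\mu_V}(z)$ directly from \eqref{eq:spherical_Schwarz} without touching the cut at all. Two small points to tidy up: (i) the residue contributions you quote as $+1/z$ and $-1/[z(1+|z|^2)]$ are each missing the factor $1+2a$ --- harmless since you restore it at the end, but fix the normalization; (ii) your inward deformation breaks down for $z\in[-A,A]$, where the pole $s=z$ lands on the cut, so establish \eqref{eq:muOmega_Stieltjes2} first for $z\in\Omega\setminus[-A,A]$ and extend by continuity of both sides, just as the paper extends its formula to $\partial\Omega$.
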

\begin{proof} 
Consider $z\in\C\setminus \Omega$ first. Then, by the definition \eqref{eq:muOmega} of $\mu_\Omega$ 
and Green's Theorem in the complex plane \eqref{eq:Green},
	\begin{align*}
		\int_\Omega\frac{d\mu_\Omega(s)}{z-s} &= \frac{1+2a}{\pi}\int_\Omega\frac{dA(s)}{(z-s)(1+|z|^2)^2} \\
		&= \frac{1+2a}{2\pi i}\oint_{\partial\Omega}\frac{\bar{s}}{(z-s)(1+|s|^2)}ds.
	\end{align*} 
Here we use the property \eqref{eq:Omega_spherical_Schwarz}
of the spherical Schwarz function, and we find
\begin{align}\label{eq:eq_lema_52}
	\int_\Omega\frac{d\mu_\Omega(s)}{z-s} = 
	\frac{1+2a}{2\pi i}\oint_{\partial\Omega}\frac{S(s)}{z-s} ds
\end{align} 
to which we apply the Residue Theorem for the exterior region $\C\setminus\Omega$. 
The spherical Schwarz function \eqref{eq:spherical_Schwarz} 
has simple poles at $s=\pm ib$ (which are in the exterior of $\Omega$) with residues $\frac{a}{1+2a}$, 
and they give the contribution 
$-\frac{a}{z - i b} - \frac{a}{z+ib} = - \frac{2az}{z^2+b^2}$
to the integral \eqref{eq:eq_lema_52}. 
	
	The contribution from the pole at $s = z$ is $(1+2a)S(z)$, and therefore
	\[ \int_{\Omega} \frac{d\mu_{\Omega}(s)}{z-s}
		= - \frac{2az}{z^2+b^2} + (1+2a)S(z), \]
	since there is no contribution from infinity.
	Then \eqref{eq:muOmega_Stieltjes1} follows because
	of \eqref{eq:spherical_Schwarz}.

	Letting $z \to \infty$ in \eqref{eq:muOmega_Stieltjes1} we find
	$\m(\mu_{\Omega}) = \m(\mu_V) = 1$, and therefore 	
  	 $\mu_{\Omega}$ is a probability measure, as it is clearly positive from \eqref{eq:muOmega}.
It remains to prove \eqref{eq:muOmega_Stieltjes2}.

Let $z\in\Omega\setminus\partial\Omega$ and let $r>0$ be such that the open disk $D_r(z) = \{w\in\C : |z-w|<r\}$ is contained in $\Omega\setminus\partial\Omega$. Then, by the definition of
$\mu_{\Omega}$ in \eqref{eq:muOmega} and the complex Green's Theorem \eqref{eq:Green},
	\begin{align*}
		\int_{\Omega\setminus D_r(z)}\frac{d\mu_\Omega(s)}{z-s} &
		= \frac{1+2a}{\pi}\int_{\Omega\setminus D_r(z)}\frac{dA(s)}{(z-s)(1+|s|^2)^2}\\
		&= \frac{1+2a}{2\pi i}\oint_{\partial\Omega}\frac{\bar{s}}{(z-s)(1+|s|^2)}ds-
		\frac{1+2a}{2\pi i}\oint_{\partial D_r(z)}\frac{\bar{s}}{(z-s)(1+|s|^2)}ds. 
	\end{align*} 
The integral over $\partial \Omega$ is evaluated using the spherical Schwarz function 
and the Residue Theorem for the exterior region as in the proof 
of \eqref{eq:muOmega_Stieltjes1}, but now 
	there is no contribution from the pole at $s=z$. We find
	\begin{align*}
	\frac{1+2a}{2\pi i}\oint_{\partial\Omega}\frac{\bar{s}}{(z-s)(1+|s|^2)}ds	&  =
	\frac{1+2a}{2\pi i} \oint_{\partial\Omega} \frac{S(s)}{z-s} ds = - \frac{2az}{z^2+b^2}
	\end{align*}
	from the residues at $s=\pm i b$.
	In  the integral over the circle $\partial D_r(z)$ we write $s = z + r e^{i\theta}$, 
	\begin{align*} 
		- \frac{1+2a}{2\pi i}	\oint_{\partial D_r(z)} \frac{\bar{s}}{(z-s)(1+|s|^2)} ds
		& = \frac{1+2a}{2\pi} 	\int_0^{2\pi} \frac{\bar{z} + r e^{-i\theta}}
			{1+ |z+ r e^{i \theta}|^2} d\theta \\
		& = \frac{(1+2a) |z|}{1+|z|^2} + \mathcal{O}(r) \qquad \text{as } r \to 0^+.
		\end{align*}
	Letting $r \to 0^+$ we  find \eqref{eq:muOmega_Stieltjes2} for $z \in \Omega 
	\setminus \partial \Omega$, and then by continuity also for $z \in \partial \Omega$.
	\end{proof}

After integration we obtain from Lemma \ref{lem:FmuO_equality}
the desired equalities.
In the proof we use that for any measure $\mu$ on $\C$ one has
\begin{equation} \label{eq:Umu-zderivative} 
	- 2 \frac{\partial}{\partial z} U^{\mu}(z) = \int \frac{d\mu(s)}{z-s}, \qquad z \in \C \setminus \supp(\mu). 
	\end{equation}

\begin{proof}[Proof of the equalities in \eqref{eq:UmuO_outside}
	and \eqref{eq:UmuO_inside}.]

In view of \eqref{eq:Umu-zderivative} we find from 
\eqref{eq:muOmega_Stieltjes1}  that
$\frac{\partial}{\partial z} U^{\mu_\Omega} = \frac{\partial}{\partial z} U^{\mu_V}$ on $\C \setminus \Omega$. 
Since both $U^{\mu_\Omega}$ and $U^{\mu_V}$ are real-valued, 
\[
\frac{\partial}{\partial\bar{z}}U^{\mu_\Omega}(z) = \overline{\frac{\partial}{\partial z}U^{\mu_\Omega}(z)} \quad\text{ and }\quad \frac{\partial}{\partial\bar{z}}U^{\mu_V}(z) = \overline{\frac{\partial}{\partial z}U^{\mu_V}(z)}.
\] 
and also the $\bar{z}$--derivatives coincide. We  conclude that $U^{\mu_{\Omega}}- U^{\mu_V}$ is constant on $\C \setminus \Omega$. 
As both  $\mu_V$ and $\mu_{\Omega}$ are probability measures, we have 
$\lim\limits_{z\to\infty}(U^{\mu_\Omega}(z)-U^{\mu_V}(z)) = 0$.
Therefore the constant is zero, and 
the equality in \eqref{eq:UmuO_outside} for $z \in \C \setminus \Omega$ follows.

\medskip
Applying $-2 \frac{\partial}{\partial z}$ to the left-hand side of
\eqref{eq:UmuO_inside} we obtain by \eqref{eq:Umu-zderivative}
\[ \int \frac{d\mu_{\Omega}(s)}{z-s}
	+ \frac{2az}{z^2+b^2} - \frac{(1+2a) \bar{z}}{1+|z|^2},
		  \]
which is $0$ for $z \in \Omega$, by \eqref{eq:muOmega_Stieltjes2}. 
Also the $\bar{z}$--derivative vanishes in $\Omega$, 
since the left-hand side of \eqref{eq:UmuO_inside} is real-valued, and 
	the equality in \eqref{eq:UmuO_inside} for $z \in \Omega$,
follows for some integration constant $c$.
\end{proof}

\section{Proof of the inequalities in Proposition \ref{prop:UmuO}}
\label{sec:inequalities}

In this section we prove the inequalities in Proposition \ref{prop:UmuO}.
For the proof we introduce a dynamical picture in which we
introduce a time-like parameter $t$ and we investigate
how the ellipses $\Omega$ and the measures $\mu_\Omega$ and $\mu_V$
evolve with $t$. 

As before we consider $b > 1$ to be fixed and $a > a_{cr} = (b^2-1)^{-1}$
will vary. To $a$ we associate the parameter 
\begin{equation} \label{eq:def_tau} 
		t_a = t = \frac{1}{1+2a}.
	\end{equation}
Since $\mu_{\Omega}$ is a probability measure we see from
\eqref{eq:muOmega} that
\begin{equation} \label{eq:def_tau2}
	t = 	\int_{\Omega} \frac{dA(z)}{\pi (1+|z|^2)^2}.
	\end{equation} 
If $a$ decreases from $\infty$ to $a_{cr}$ 
then $t$ increases from $0$ to
\begin{equation} \label{eq:def_taucr} 
	t_{cr} = \frac{b^2-1}{b^2+1} < 1.
\end{equation}

\subsection{Family of measures $\rho_t$}

For each $t\in (0, t_{cr})$, we use $\Omega(t)$ do
denote the region enclosed by the ellipse \eqref{eq:equation_ellipse}
with $a = \frac{1-t}{2t}$, and \eqref{eq:def_tau}  holds.

As $t$ increases, $a$ decreases and, since $b>1$, the coefficients of $x^2$ 
and $y^2$ in \eqref{eq:equation_ellipse} also decrease. 
Hence the two semi--axes of the ellipse increase as $t$ increases in 
a strictly monotone and continuous way, starting from zero at $t =0$. 
Therefore, we can write $\Omega(t)$ as a disjoint union of ellipses
\begin{equation}\label{eq:decomposition_ellipses}
\Omega(t) = \bigcup_{s\in[0,t]}\partial\Omega(s),
\end{equation} 
where we set $\Omega(0) := \{(0,0)\}$. 

For each $t \leq t_{cr}$, we have the measure $\mu_{\Omega(t)}$ and by \eqref{eq:muOmega} 
\[ t d\mu_{\Omega(t)}(z) =  \frac{dA(z)}{\pi (1+|z|^2)^2}\Big|_{\Omega(t)}.
\]
Thus $\left(t \mu_{\Omega(t)}\right)_t$ is an increasing family of measures. Then
the derivative
\begin{equation} \label{eq:rhot} 
	\rho_t = \frac{\partial}{\partial t} (t \mu_{\Omega(t)}) = 
	\lim_{h \to 0} \frac{(t+h) \mu_{\Omega(t+h)} - t \mu_{\Omega(t)}}{h} 
	\end{equation}
exists for almost every $t \in (0, t_{cr})$ (by general theory as in \cite{BuyarovRakhmanov1999})), where the limit is in the weak$^*$ sense. In our case the limit
in \eqref{eq:rhot} exists for every $t \in (0, t_{cr})$ and defines a probability measure $\rho_t$ that is supported
on $\partial \Omega(t)$.  The co-area formula, 
see e.g.\ \cite[Chapter IV.1, Theorem 1]{Chavel1984},  
provides an explicit expression for $\rho_t$
\begin{equation} \label{eq:rhot_coarea} 
	d\rho_t(z) = \frac{1}{\pi \left| \grad u(z) \right|} \frac{d\nu_t(z)}{(1+|z|^2)^2}, \qquad z \in \partial \Omega(t), \end{equation}
in terms of the arclength $\nu_t$ on $\partial \Omega(t)$, 
and $u : \Omega(t_{cr}) \to \R^+$ is the mapping that assigns $u(z) = t$ to $z \in \partial \Omega(t)$. We will not use
the formula \eqref{eq:rhot_coarea}.

We recover $\mu_{\Omega(t)}$ by integration of \eqref{eq:rhot}
(i.e., the Fundamental Theorem of Integral Calculus) 
\[ t \mu_{\Omega(t)} = \int_0^{t} \rho_s ds. \]
In particular
\begin{equation} \label{eq:UmuOmega_integral} 
	t U^{\mu_{\Omega(t)}}(z) = \int_0^t U^{\rho_s}(z) ds,
		\qquad z \in \C.
\end{equation}

\begin{lemma} \label{eq:rhot_balayage}
	For every $t \in (0,t_{cr})$ we have that $\rho_t$ is the balayage of 
	$\frac{1}{2}(\delta_{ib} + \delta_{-ib})$ onto $\Omega(t)$.
	\end{lemma}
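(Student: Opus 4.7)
My approach is to differentiate the pointwise equality \eqref{eq:UmuO_inside} of Proposition \ref{prop:UmuO} with respect to the time parameter $t=1/(1+2a)$ at an interior point of the sliding region $\Omega(t)$, and then recognize the resulting identity as the defining property of balayage.

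Substituting $a=(1-t)/(2t)$ into \eqref{eq:UmuO_inside} yields $at=(1-t)/2$ and $t(1+2a)/2=1/2$, so multiplication by $t$ gives, for $z\in\Omega(t)$,
\[
tU^{\mu_{\Omega(t)}}(z)+\frac{1-t}{2}\log\frac{1}{|z^2+b^2|}+\frac{1}{2}\log(1+|z|^2)=tc(t),
\]
where $c(t)$ denotes the constant from \eqref{eq:UmuO_inside}. Now fix $t_0\in(0,t_{cr})$ and $z\in\Omega(t_0)^\circ$. By the disjoint-union decomposition \eqref{eq:decomposition_ellipses}, $z$ belongs to $\partial\Omega(s^*)$ for a unique $s^*<t_0$, so $z\in\Omega(s)^\circ$ throughout the open neighborhood $(s^*,t_{cr})$ of $t_0$; hence the displayed identity is valid in a full neighborhood of $t_0$ and every term (including $tc(t)$) is smooth in $t$ there. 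Differentiating at $t=t_0$, and invoking \eqref{eq:UmuOmega_integral} together with the fundamental theorem of calculus (the integrand $s\mapsto U^{\rho_s}(z)$ is continuous at $s=t_0$ because $z\notin\partial\Omega(s)=\supp(\rho_s)$ for $s$ close to $t_0$), I obtain
\[
U^{\rho_t}(z)=\frac{1}{2}\log\frac{1}{|z^2+b^2|}+C_t=U^{\sigma}(z)+C_t,\qquad z\in\Omega(t)^\circ,
\]
where $\sigma=\tfrac12(\delta_{ib}+\delta_{-ib})$ and $C_t:=\frac{d}{dt}[tc(t)]$; continuity then extends the identity to all of $\Omega(t)$.

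To conclude, I would combine three facts: $\rho_t$ is supported on $\partial\Omega(t)$ (from its construction as the weak-$*$ derivative of the strictly growing family of area measures); its total mass is $\frac{d}{dt}(t)=1=\m(\sigma)$; and its logarithmic potential differs from $U^\sigma$ by an additive constant on $\Omega(t)$. These are precisely the defining properties of the balayage, and uniqueness is standard (any signed measure on $\partial\Omega(t)$ with vanishing total mass whose logarithmic potential is constant on $\Omega(t)^\circ$ must itself vanish, as one sees by solving the exterior Dirichlet problem and using that a bounded harmonic function on $\C\setminus\Omega(t)$ tending to zero at infinity is forced to vanish). The only delicate point of the whole argument is the justification of the $t$-differentiation: a priori \eqref{eq:UmuO_inside} only holds on the sliding set $\Omega(t)$, but the strict monotonicity encoded by \eqref{eq:decomposition_ellipses} ensures that a fixed interior point $z$ remains in $\Omega(s)^\circ$ throughout a full open neighborhood of $t_0$, which is what makes the differentiation legitimate and the remainder of the proof routine.
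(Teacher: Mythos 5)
Your proof is correct and follows essentially the same route as the paper: differentiate the $t$-dependent identity obtained from \eqref{eq:UmuO_inside} (the paper's \eqref{eq:tUmuOt}) with respect to $t$ at points of $\Omega(t)$, identify the derivative of $tU^{\mu_{\Omega(t)}}$ with $U^{\rho_t}$ via \eqref{eq:UmuOmega_integral}, and recognize the resulting identity $U^{\rho_t}=\tfrac12 U^{\delta_{ib}+\delta_{-ib}}+\text{const}$ on $\Omega(t)$ as the characterization of balayage. Your additional care about why the differentiation is legitimate at a fixed interior point, and the uniqueness argument for the balayage characterization, are justifications the paper leaves implicit.
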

\begin{proof}
	From \eqref{eq:UmuO_inside} with $t = \frac{1}{1+2a}$, we have
	with a constant $c = c(t)$ that will depend on $t$,
	\begin{equation} \label{eq:tUmuOt} 
		t U^{\mu_{\Omega(t)}}(z)
		+ \frac{1-t}{2} \log \frac{1}{|z^2+b^2|}
			+ \frac{1}{2} \log(1+|z|^2) = c(t),
				\qquad z \in \Omega(t). \end{equation}	
	Taking the derivative of \eqref{eq:tUmuOt} 
	with respect to $t$, we obtain
	\begin{equation} \label{eq:Urhot_balayage} 
		U^{\rho_t}(z) = \frac{1}{2} U^{\delta_{ib} + 		\delta_{-ib}}(z) 
		+ c'(t), \qquad  z \in \Omega(t), \end{equation}
	since 
	\[	U^{\delta_{ib} + \delta_{-ib}}(z)
	= \log \frac{1}{|z-ib|} + \log \frac{1}{|z+ib|}
	= \log \frac{1}{|z^2+b^2|}. \]	
	The property \eqref{eq:Urhot_balayage} characterizes
	the  balayage measure,
	and the lemma follows.
\end{proof}

\subsection{Family of measures $\omega_t$}
Next we obtain a similar decomposition of $\mu_V$. 
To indicate the dependence of $V$, see \eqref{eq:def_V}, 
on the parameter $t = \frac{1}{1+2a}$, we write
\begin{equation} \label{eq:Vtau}
	V = V(t) = \frac{1}{4t}\left[(1+t)\log(x^2+b^{-2})+(1-t)\log(x^2+b^2)\right].
\end{equation}
Accordingly, we denote by $\mu_{V(t)}$ the equilibrium measure in the external field
$V(t)$ on the real line, and by $A(t)$ and $C(t)$ the constants 
in \eqref{eq:AC_postcrit}, but depending on $t \in (0, t_{cr})$.

\begin{lemma}\label{lem:deriv_muv} The following hold. 
	\begin{enumerate}
	\item[\rm (a)] $(t \mu_{V(t)})_t$ with $0 < t < t_{cr}$ is an increasing family of measures
	\item[\rm (b)] The derivative
	\begin{equation} \label{eq:omegat} 
		\omega_t = \frac{\partial}{\partial t} (t \mu_{V(t)}) 
		\end{equation}
	exists for every $t \in (0, t_{cr})$ and is a probability measure on $[-A(t), A(t)]$.
	\item[\rm (c)] $\omega_t$ is a balayage measure of point masses onto $[-A(t), A(t)]$,
	\[ \omega_t = \frac{1}{2} \Bal(\delta_{ib} + \delta_{ib^{-1}}, [-A(t),A(t)]) \]
	\end{enumerate}	
\end{lemma}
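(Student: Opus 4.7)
The plan is to mimic the proof of Lemma \ref{eq:rhot_balayage} for $\rho_t$: establish monotonicity of $(t\mu_{V(t)})_t$ using the explicit density, define $\omega_t$ as the $t$-derivative, and identify it as a balayage by differentiating the Frostman equation for $\mu_{V(t)}$.

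For parts (a) and (b), using the explicit density from Theorem \ref{thm:equilibrium_muV}(ii), write
\[
f(t,x) := t\,\frac{d\mu_{V(t)}}{dx}(x) = \frac{\sqrt{g(t,x)}}{\pi(x^2+b^2)(x^2+b^{-2})}\,\mathbbm{1}_{[-A(t),A(t)]}(x),\quad g(t,x) := t^2C(t)\bigl(A(t)^2 - x^2\bigr).
\]
Substituting $a = (1-t)/(2t)$ into \eqref{eq:AC_postcrit} and simplifying the product $(b^2a+a+1)(b^2a-a-1)$ gives
\[
t^2C(t)A(t)^2 = \frac{(b^4-1)t}{b^2},\qquad t^2C(t) = \frac{(b^4-1)\bigl[(b^4-1)(1+t^2) - 2t(b^4+1)\bigr]}{4b^4},
\]
so the first is linear and increasing in $t$, and the second is strictly decreasing on $(0,t_{cr})$ because $\frac{d}{dt}[t^2C(t)] = \frac{(b^4-1)[t(b^4-1)-(b^4+1)]}{2b^4}$ and $t(b^4-1) \le t_{cr}(b^4-1) = (b^2-1)^2 < b^4+1$. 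Hence $\partial g/\partial t > 0$ for every $x\in\R$ and every $t\in(0,t_{cr})$; an analogous computation with the denominator of $A(t)^2$ shows that $A(t)$ is strictly increasing in $t$. Consequently $f(t,x)$ is pointwise non-decreasing in $t$ on $\R$, which proves (a). Then $\omega_t := \partial(t\mu_{V(t)})/\partial t$ is a well-defined positive measure with absolutely continuous density $\partial f/\partial t$; its boundary singularity is of order $|A(t)-|x||^{-1/2}$, hence integrable, and differentiating the total-mass identity $\int f(t,x)\,dx = t$ yields $\omega_t(\R)=1$, giving (b).

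For part (c), on the support $[-A(t),A(t)]$ the Frostman equation multiplied by $t$ reads $tU^{\mu_{V(t)}}(x) + tV(t)(x) = t\ell(t)$. Differentiating in $t$ (the interchange of $\partial_t$ with the $y$-integral being justified by the integrable bounds above together with the logarithmic integrability of the kernel) yields
\[
U^{\omega_t}(x) + \frac{d}{dt}\bigl[tV(t)(x)\bigr] = (t\ell(t))',\qquad x \in (-A(t),A(t)).
\]
Substituting $a = (1-t)/(2t)$ into \eqref{eq:def_V} gives $tV(t)(x) = \tfrac{1+t}{4}\log(x^2+b^{-2}) - \tfrac{1-t}{4}\log(x^2+b^2)$, whence for real $x$,
\[
\frac{d}{dt}\bigl[tV(t)(x)\bigr] = \tfrac{1}{4}\log\bigl[(x^2+b^{-2})(x^2+b^2)\bigr] = -\tfrac{1}{2}U^{\delta_{ib}+\delta_{ib^{-1}}}(x),
\]
using $\log(x^2+c^2) = -2U^{\delta_{ic}}(x)$ for real $x$ and $c > 0$. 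Hence $U^{\omega_t}(x) - \tfrac{1}{2}U^{\delta_{ib}+\delta_{ib^{-1}}}(x)$ is constant on $[-A(t),A(t)]$ (extending from the open interior by continuity of both potentials). Since $\omega_t$ is a positive measure supported on $[-A(t),A(t)]$ with total mass $1 = \m\bigl(\tfrac{1}{2}(\delta_{ib}+\delta_{ib^{-1}})\bigr)$, the uniqueness of logarithmic balayage onto a compact subset of $\R$ identifies $\omega_t = \tfrac{1}{2}\Bal\bigl(\delta_{ib}+\delta_{ib^{-1}},[-A(t),A(t)]\bigr)$, proving (c).

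The main obstacle I anticipate is the algebraic verification that $t^2C(t)A(t)^2$ is linear in $t$ and that $\frac{d}{dt}[t^2C(t)]$ has the correct (negative) sign throughout $(0,t_{cr})$: both require expanding the product $(b^2a+a+1)(b^2a-a-1)$ after the substitution $a = (1-t)/(2t)$ and comparing with the boundary value $t_{cr}(b^4-1) = (b^2-1)^2$. Once this algebraic check is in place, part (a) follows immediately from the sign of $\partial g/\partial t$, part (b) is a routine corollary, and part (c) reduces to the single differentiation above combined with the standard uniqueness of balayage measures.
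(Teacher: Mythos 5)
Your proof is correct. Parts (a) and (b) follow the same route as the paper: both differentiate the explicit density from Theorem \ref{thm:equilibrium_muV}(ii) in $t$; the paper computes the derivative in closed form as
$\frac{1}{2\pi\sqrt{A(t)^2-x^2}}\bigl[\frac{b\sqrt{A(t)^2+b^2}}{x^2+b^2}+\frac{b^{-1}\sqrt{A(t)^2+b^{-2}}}{x^2+b^{-2}}\bigr]$ and reads off positivity, whereas you factor $t^2C(t)(A(t)^2-x^2)$ into a linear increasing piece $t^2C(t)A(t)^2=(b^4-1)t/b^2$ minus $x^2$ times the decreasing quantity $t^2C(t)$; both verifications check out (and your formula $tV(t)=\tfrac{1+t}{4}\log(x^2+b^{-2})-\tfrac{1-t}{4}\log(x^2+b^2)$ is the correct one, the displayed \eqref{eq:Vtau} having a sign typo). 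For part (c) you take a genuinely different route: you differentiate the Frostman condition $tU^{\mu_{V(t)}}+tV(t)=t\ell(t)$ on the (open) support and invoke uniqueness of balayage, which is exactly the mechanism the paper uses for the companion Lemma \ref{eq:rhot_balayage} about $\rho_t$, and it has the advantage of not requiring any further formula. The paper instead identifies $\omega_t$ by matching the closed-form derivative of the density against the known density $\frac{c\sqrt{A^2+c^2}}{\pi(x^2+c^2)\sqrt{A^2-x^2}}$ of $\Bal(\delta_{ic},[-A,A])$; that is purely computational and needs no justification of differentiating under the potential integral, whereas your argument leans on that interchange (which, as you note, is a standard but nontrivial point, handled at the same level of rigor as the paper's own treatment of \eqref{eq:tUmuOt}). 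Either approach is acceptable.
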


\begin{proof} The proof is by calculation. 
Let $0 < t < t_{cr}$.	
	By \eqref{eq:density_postcrit} and \eqref{eq:AC_postcrit} we have
	\[ \pi (x^2+b^2)(x^2+b^{-2})
	\frac{d\mu_{V(t)}}{dx} = \sqrt{C(t)} \sqrt{A(t)^2-x^2},
	\quad x \in [-A(t), A(t)], \]
with $C(t) = \frac{b^4-1}{b^4} \frac{b^4(1-t)^2-(1+t)^2}{4t^2}$
and $A(t)^2  = \frac{4b^2t}{b^4(1-t)^2-(1+t)^2}$.
Then for $x \in (-A(t), A(t))$,
\begin{align} 
	\frac{d}{d t} \left(t  \frac{d\mu_{V(t)}}{dx} \right) 
	& = \frac{1}{2 \pi \sqrt{A(t)^2-x^2}}
			\left[\frac{b \sqrt{A(t)^2+b^2}}{x^2+b^2}
				+ \frac{b^{-1} \sqrt{A(t)^2+b^{-2}}}{x^2+ b^{-2}} \right]
			\label{eq:omegat_density}
\end{align}
which is clearly positive, and it gives part (a) of  Lemma
\ref{lem:deriv_muv}. Part (b)
also follows, where $\omega_t$ is the probability measure  with density \eqref{eq:omegat_density} on $[-A(t),A(t)]$.

The balayage of a point mass $\delta_{ic}$, $c > 0$, onto
the real interval $[-A,A]$ has the density
\[  \frac{d}{dx} \Bal(\delta_{ic},[-A,A])
	=  \frac{c \sqrt{A^2+c^2}}{\pi (x^2+c^2)\sqrt{A^2-x^2}}. \]
Comparing this with \eqref{eq:omegat_density} we see that $\omega_t$ 
is indeed  the average of balayage measures as claimed in part (c).
\end{proof}

It follows from part (b) of Lemma \ref{lem:deriv_muv} that
for every $t \in (0, t_{cr})$,
\[ t \mu_{V(t)} = \int_0^{t} \omega_s ds \]
and in particular
\begin{equation} \label{eq:UmuV_integral} 
	t U^{\mu_{V(t)}}(z) = \int_0^{t} U^{\omega_s}(z) ds,
	\qquad z \in \C. 
\end{equation}
which is the analogue of \eqref{eq:UmuOmega_integral}.

\subsection{Proof of inequality in \eqref{eq:UmuO_outside}}

We let $t_a = \frac{1}{1+2a}$ so that $\Omega = \Omega(t_a)$
and $V = V(t_a)$.

Let  $t < t_a$.  Then by  \eqref{eq:UmuO_outside} 
\[ U^{\mu_{\Omega(t)}}(z) = U^{\mu_{V(t)}}(z), \qquad z \in \C \setminus \Omega(t). \]
By \eqref{eq:UmuOmega_integral} and \eqref{eq:UmuV_integral}
we then have 
\[ \int_0^t U^{\rho_s}(z) ds = \int_0^t U^{\omega_s}(z) ds,
	\qquad z \in \C \setminus \Omega(t). \]
Taking the derivative with respect to $t$,
we find
\[ U^{\rho_{t}}(z) = U^{\omega_{t}}(z), \]
for $z \in \C \setminus \Omega(t)$, and by continuity of
the logarithmic potentials also for $z \in \partial \Omega(t)$.
Since $\rho_{t}$ is a probability measure on $\partial \Omega(t)$,
we conclude that $\rho_{t}$ is the balayage of $\omega_{t}$ onto 
$\partial \Omega(t)$.

In the interior of $\Omega(t)$, the logarithmic potential
$U^{\rho_{t}}$ is harmonic, since $\rho_t$ is supported on
$\partial \Omega(t)$, while  $U^{\omega_{t}}$ is superharmonic.
Thus $U^{\omega_{t}} - U^{\rho_{t}}$ is superharmonic in
the interior and zero on the boundary of $\Omega(t)$. 
By the Minimum Principle for superharmonic functions
it then follows that $U^{\rho_t} \leq U^{\omega_t}$ on
$\Omega(t)$, and then in fact on all of $\C$.

Then integrating the inequality from $0$ to $t_a$
and using \eqref{eq:UmuOmega_integral} and \eqref{eq:UmuV_integral}
we obtain the inequality in \eqref{eq:UmuO_outside}.

\subsection{Proof of inequality in  \eqref{eq:UmuO_inside}}

Let again $t_a = \frac{1}{1+2a}$ so that $\Omega = \Omega(t_a)$.
The inequality \eqref{eq:UmuO_inside} comes down to proving 
\begin{equation} \label{eq:UmuO_toprove} 
	t U^{\mu_{\Omega(t_a)}}(z)
		+ \frac{1-t}{2} \log \frac{1}{|z^2+b^2|}
			+ \frac{1}{2} \log(1+|z|^2) \geq c(t),
			\qquad z \in \C,
\end{equation}
for $t=t_a$, where the constant $c(t)$ is such that equality holds on $\Omega(t)$,
see also \eqref{eq:tUmuOt}.

As $t \to t_{cr}$ we recall that $\Omega(t)$ tends to the
horizontal strip \eqref{eq:Omegacr} and  
\[ \mu_{\Omega(t)} \to \mu_{\Omega(t_{cr})}
	= \left. \frac{dA(z)}{\pi (1+|z|^2)^2} \right|_{\Omega_{cr}}. \]
The inequality \eqref{eq:UmuO_toprove} with $t = t_{cr}$
follows from the work of Brauchart et al. \cite{Brauchart2014}

Recall from \eqref{eq:Urhot_balayage} that we have
\[ U^{\rho_t}(z) = \frac{1}{2} \log \frac{1}{|z^2+b^2|}  + c'(t), \qquad z \in \Omega(t). \]
Outside $\Omega(t)$ we have inequality 
\begin{equation} \label{eq:Urhot_inequality} 
	U^{\rho_t}(z) \leq \frac{1}{2} \log \frac{1}{|z^2+b^2|}  + c'(t), \qquad z \in \C\setminus\Omega(t), \end{equation}
as can be seen from the Minimum Principle for superharmonic functions
applied to the function $z \mapsto
\frac{1}{2} \log \frac{1}{|z^2+b^2|} - U^{\rho_t}(z)$
on $\C \setminus \Omega(t)$.

Then we have by \eqref{eq:UmuOmega_integral}
\[ t_a U^{\mu_{\Omega(t_a)}}(z) = 
	t_{cr} U^{\mu_{\Omega(t_{cr})}}(z)
	- \int_{t_a}^{t_{cr}} U^{\rho_t}(z) dt. \]
Using \eqref{eq:Urhot_inequality} and the fact that 
\eqref{eq:UmuO_toprove} holds for $t_{cr}$,   we can estimate
\begin{align*} 
	t_a U^{\mu_{\Omega(t_a)}}(z) & \geq
		- \frac{1-t_{cr}}{2} \log \frac{1}{|z^2+b^2|}
		- \frac{1}{2} \log(1+|z|^2) + c(t_{cr})
		- \int_{t_a}^{t_{cr}} 
	\left( \frac{1}{2} \log \frac{1}{|z^2+b^2|}  + c'(t) \right) dt \\
	 & = - \frac{1-t_a}{2} \log \frac{1}{|z^2+b^2|}
	 	- \frac{1}{2} \log(1+|z|^2) + c(t_a)  
\end{align*}
which is \eqref{eq:UmuO_toprove} with $t=t_a$.

\appendix
\section{Proof of \eqref{eq:musigma} and \eqref{eq:volume_droplet}} \label{subsec:musigma_lemma}

\begin{lemma}
	Suppose $\sigma$ is a measure on $\S^2$ such that
	\eqref{eq:disjoint_supports} hold. Let $D = \supp(\mu_{\sigma})$. 
	Then \eqref{eq:musigma} and \eqref{eq:volume_droplet}  hold. 
\end{lemma}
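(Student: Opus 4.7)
The plan is to apply the spherical Laplace--Beltrami operator $\Delta$ to the Frostman equality and to read off the density of $\mu_\sigma$ in one stroke. Specializing \eqref{eq:frostmaneasy} with $Q = U^\sigma$ (and since no exceptional set occurs in this setting) gives
\[
U^{\mu_\sigma}(x) + U^\sigma(x) \;=\; \ell_\sigma \qquad\text{for all } x \in D = \supp(\mu_\sigma),
\]
so the function $U^{\mu_\sigma}+U^\sigma$ is constant, and hence annihilated by $\Delta$, on the open set $\mathrm{int}(D)$.

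The key tool is the distributional identity
\[
-\Delta U^{\mu} \;=\; 2\pi \bigl(\mu - \m(\mu)\,\lambda\bigr) \qquad \text{on } \S^2,
\]
valid for any finite measure $\mu$. This follows from the pointwise calculation $\Delta \log \|x-y\|^{-1} = \tfrac{1}{2}$ for $x\neq y$ (done in spherical coordinates centred at $y$) together with the closed-manifold balance $\int_{\S^2}\Delta f\,dA=0$, which fixes the Dirac coefficient at $y$. Applying this identity to the measure $\mu = \mu_\sigma+\sigma$ and using that the Laplacian of its potential vanishes on $\mathrm{int}(D)$, we obtain, as measures on $\mathrm{int}(D)$,
\[
\mu_\sigma + \sigma \;=\; \bigl(1+\m(\sigma)\bigr)\,\lambda,
\]
where we have used $\m(\mu_\sigma)=1$. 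By the disjoint-support hypothesis \eqref{eq:disjoint_supports}, $\sigma$ puts no mass on $D$, so this collapses to
\[
\mu_\sigma\big|_{\mathrm{int}(D)} \;=\; \bigl(1+\m(\sigma)\bigr)\,\lambda\big|_{\mathrm{int}(D)}.
\]

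After upgrading this interior identity to equality on all of $D$, the probability normalization $\m(\mu_\sigma)=1$ forces $\lambda(D) = 1/(1+\m(\sigma))$, which is \eqref{eq:volume_droplet}, and substituting back yields \eqref{eq:musigma}. The main obstacle is precisely this last upgrade: one must exclude a singular part of $\mu_\sigma$ on $\partial D$ and check that $\lambda(\partial D)=0$. For the concrete droplets treated in the paper (bounded either by the boundaries of two spherical caps or by the ellipse of Theorem \ref{thm:main}) this is automatic because $\partial D$ is a finite union of real-analytic curves. In the stated generality, the cleanest route is to identify $\mu_\sigma$ as a partial balayage via \cite[Theorem 11]{Gustafsson2018}, which delivers both the absolute continuity of $\mu_\sigma$ with respect to $\lambda$ and the necessary boundary regularity.
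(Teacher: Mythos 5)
Your proposal is correct and follows essentially the same route as the paper: apply the distributional identity $\tfrac{1}{2\pi}\Delta U^{\mu} = \mu - \m(\mu)\lambda$ to the Frostman equality on $\mathrm{int}(D)$, drop $\sigma$ by the disjoint-support hypothesis, and invoke \cite[Theorem 11]{Gustafsson2018} to handle $\partial D$ before normalizing. The only difference is presentational — you correctly flag the upgrade from $\mathrm{int}(D)$ to $D$ as the delicate step, which the paper resolves by exactly the citation you propose.
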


\begin{proof}
	It is well-known that for any measure $\mu$ on $\S^2$,
	\[ \frac{1}{2\pi} \Delta U^{\mu} = \mu - \m(\mu) \lambda \]
	in the distributional sense, where we use $\Delta = - \div \grad$ for the Riemannian Laplacian on $\S^2$. 
	We apply this to $\sigma$ and $\mu_{\sigma}$ to find
	\begin{equation} \label{eq:DeltaU}
	\frac{1}{2\pi} \Delta U^{\sigma}  = 
	\sigma - \m(\sigma) \lambda, \qquad 
	\frac{1}{2\pi} \Delta U^{\mu_\sigma}  = 
	\mu_\sigma -   \lambda, 
	\end{equation}
	By the variational conditions \eqref{eq:frostmaneasy}
	the sum $U^{\sigma} + U^{\mu_{\sigma}}$ is constant on the interior $\text{int}(D)$,
	and therefore $\Delta(U^{\sigma} + U^{\mu_{\sigma}}) = 0$
	on $\text{int}(D)$. Thus by \eqref{eq:DeltaU}
	\[ \sigma - \m(\sigma) \lambda + \mu_\sigma -   \lambda = 0 \quad \text{on } \text{int}(D). \] 
	Since $\supp(\sigma) \cap D = \emptyset$ this implies that
	$\mu_{\sigma} = (1+\m(\sigma)) \lambda$
	on $\text{int}(D)$. From \cite[Theorem 11]{Gustafsson2018} (or Theorem 6.3 in the arXiv version) it follows that $\lambda(\partial D) = 0$ and hence $\mu_\sigma = (1+\m(\sigma))\lambda$ on $D$. Since $\mu_{\sigma}$ is a probability measure on $D$
	we then have
	\[ \mu_{\sigma} = (1+\m(\sigma)) \lambda_D \]
	and $1 = \m(\mu_{\sigma}) = 
	(1+\m(\sigma)) \lambda(D)$.
	Then \eqref{eq:musigma} and \eqref{eq:volume_droplet} follow.
\end{proof}

\section*{Acknowledgements}

The authors are supported by FWO Flanders project EOS 30889451.
The second author is also supported by long term structural funding-Methusalem grant
of the Flemish Government, and by FWO Flanders project G.0864.16.

\bibliographystyle{amsplain}
\bibliography{refs}
\end{document}